\newtheorem{theorem}{Theorem}[section]
\newtheorem{lemma}[theorem]{Lemma}
\newtheorem{prop}[theorem]{Proposition}
\newtheorem{cor}[theorem]{Corollary}
\newtheorem{conj}[theorem]{Conjecture}
\newtheorem{remark}[theorem]{Remark}
\newtheorem{definition}[theorem]{Definition}
\numberwithin{equation}{section} \numberwithin{figure}{section}
\newcommand{\Qbar}{\overline{\mathbb Q}}
\newcommand\PP{\mathbb{P}}
\newcommand\ZZ{\mathbb{Z}}
\newcommand\Z{\mathbb{Z}}
\newcommand\Q{\mathbb{Q}}
\newcommand\QQ{\mathbb{Q}}
\newcommand\CC{\mathbb{C}}
\newcommand\calO{\mathcal{O}}
\newcommand{\calA}{\mathcal{A}}
\newcommand{\calT}{\mathcal{T}}
\newcommand{\calX}{\mathcal{X}}
\newcommand{\calY}{\mathcal{Y}}
\newcommand{\calZ}{\mathcal{Z}}
\newcommand{\calE}{\mathcal{E}}
\newcommand{\calF}{\mathcal{F}}
\newcommand{\disc}{\mathrm{Disc}}
\newcommand{\calW}{\mathcal{W}}
\newcommand{\calM}{\mathcal{M}}
\newcommand{\wtilde}{\widetilde}
 \DeclareMathOperator{\Spec}{Spec}
\begin{document}
 \title[Uniform height bounds for hyperbolic varieties]{Bounding heights uniformly in families of hyperbolic varieties}
 \author{Kenneth Ascher AND Ariyan Javanpeykar}

 \begin{abstract} We show that, assuming Vojta's height conjecture, the height of a rational point on an algebraically hyperbolic variety  can be bounded ``uniformly'' in families. This generalizes a result of Su-Ion Ih for curves of genus at least two to higher-dimensional varieties. As an application, we show that, assuming Vojta's height conjecture, the height of  a rational point  on a curve of general type is uniformly bounded. Finally, we prove a similar result for smooth hyperbolic surfaces with $c_1^2 > c_2$.
 \end{abstract}
  \maketitle
\section{Introduction}

The celebrated work of Caporaso, Harris, and Mazur \cite{chm}, sparked an interest in discovering implications of Lang's conjecture for rational points on varieties of general type. In fact, they show that Lang's conjecture implies a uniform bound, based solely on $k$ and the genus, of the number of $k$-points on a curve of general type defined over a number field $k$ (cf. \cite{av, has}). As Vojta's height conjecture (Conjecture  \ref{conj:Vojta}) implies the conjecture of Lang,  the aforementioned results show that Vojta's height conjecture also implies a  uniform version of Lang's conjecture. In particular, it seems reasonable to suspect that Vojta's height conjecture also has consequences for ``uniform'' height bounds.

However, one cannot expect  uniform height bounds in the naive sense.  
Indeed,  for all $P \in\mathbb P^2(\mathbb Q)$ and all $d\geq 4$, there is a smooth curve $X$ of degree $d$ in $\mathbb P^2_\QQ$ with $P \in X(\mathbb{Q})$.  Thus, for all $d\geq 4$, there is no real number $c>0$ depending only on $d$ such that for all smooth degree $d$ hypersurfaces $X \subset \mathbb P^2_\QQ$ and all $P \in X(\mathbb Q)$ the inequality $h(P) \leq c$ holds.   In particular, there is no real number $c>0$ such that for all smooth quartic hypersurfaces $X \subset \mathbb P^2_\QQ$ and all $P \in X(\mathbb Q)$ the inequality $h(P) \leq c$ holds. 

Thus, it is at first sight not  clear what is meant by ``uniform'' height bounds.
However, Su-Ion Ih has shown \cite{ih} that Vojta's height conjecture implies that the height of a rational point on a smooth proper curve of general type is bounded uniformly in families with the bound depending linearly on the height of the curve. Ih later showed in \cite{ih2} that the same is true for integral points on elliptic curves. 

The goal of this paper is to generalize Ih's results in \cite{ih} by investigating consequences of Vojta's height conjecture for families of (algebraically) hyperbolic varieties of general type. 
In this paper, a proper   scheme $X$ over a field $k$ is called  (algebraically) hyperbolic if all integral subvarieties of $X $ are  of general type; see Definition \ref{defn:hyperbolic}. 

In the statement of our main result we consider morphisms of algebraic stacks $f\colon X\to Y$ which are representable by schemes, i.e., for all schemes $S$ and all morphisms $S\to Y$, the algebraic stack $X\times_Y S$ is (representable by) a scheme. Furthermore, a substack of an algebraic stack is constructible if it is a finite union of locally closed substacks. Moreover, we will use the  relative discriminant $d_k(\mathcal T_P)$ of a point on an algebraic stack over a number field $k$; we refer the reader to Section \ref{section:discr} for a precise definition of the relative discriminant $d_k(\calT_P)$. Also, to state our theorem, we will use heights on stacks as discussed in Section \ref{section:heights_on_stacks}.

\begin{theorem}\label{thm:stackthm} Let $k$ be a number field and let $f \colon X \to Y$ be a proper surjective morphism of  proper   Deligne-Mumford stacks over $k$ which is representable by schemes.  Let $h$ be a height function on $X$ and let $h_Y$ be a height function on $Y$ associated to an ample divisor with $h_Y \geq 1$. 
Assume Vojta's height conjecture (Conjecture \ref{conj:Vojta}). Let $U\subset Y$ be a constructible substack such that, for all $t\in U$, the variety $X_t$ is smooth and hyperbolic.  Then there is a real number $c > 0$ depending only on $k$, $Y$, $X$, and $f$ such that, for all   $P$ in $X(k)$ with $f(P)$ in  $U$, the following inequality holds \[ h(P) \leq c  \cdot \big(h_Y(f(P)) + d_k(\calT_P)\big). \] 
	\end{theorem}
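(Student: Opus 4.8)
The plan is a double Noetherian induction --- on the total space and on the base --- whose only ``transcendental'' input is a single application of Vojta's height conjecture to the total space $X$, combined with the bigness of $\omega_{X_t}$ forced by the general type hypothesis. The role of algebraic hyperbolicity (Definition \ref{defn:hyperbolic}) is precisely to keep the induction closed: every subvariety of a fiber produced along the way is again of general type. First I would reduce to the case $h=h_{A_X}$ for a fixed ample divisor $A_X$ on $X$, since every height is bounded above by an ample one up to $O(1)$. Then, after a standard spreading-out (and freely passing to strata of $U$, which is harmless as a finite union), I may assume $f$ is flat over $U$ with smooth geometrically integral fibers, $\omega_{X/Y}$ is a line bundle restricting to $K_{X_t}$ on each $X_t$ with $t\in U$, and $X$ admits a fixed smooth compactification with normal crossings boundary $\Delta$ (heights on non-proper strata being restricted from the original proper $X$).

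The first real step is to extract a single integer $m\ge 1$ with $\omega_{X_t}^{\otimes m}\otimes A_X^{\vee}|_{X_t}$ effective for \emph{all} $t\in U$. For one $t$ this holds for some $m$ because $X_t$ is of general type, so $\omega_{X_t}$ is big; to make $m$ uniform I would observe that the loci
\[
V_n \;=\; \bigl\{\, t\in U \ :\ \omega_{X_t}^{\otimes (n!)}\otimes A_X^{\vee}|_{X_t}\ \text{and}\ \omega_{X_t}^{\otimes (n!)}\ \text{are both effective}\,\bigr\}
\]
are constructible (by semicontinuity of cohomology), increasing in $n$ (as $n!\mid(n+1)!$ and tensor products of effective sheaves are effective), and exhaust $U$, so by Noetherianity $V_N=U$ for some $N$; take $m=N!$. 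Pushing $\omega_{X/Y}^{\otimes m}\otimes A_X^{\vee}$ forward and twisting by $\calO_Y(qA_Y)$ for $q\gg 0$ to produce a global section, I obtain that $\omega_{X/Y}^{\otimes m}\otimes A_X^{\vee}\otimes f^{*}\calO_Y(qA_Y)$ is linearly equivalent to an effective divisor $E$ on $X$ whose support contains no fiber over $U$, away from a proper constructible locus in $Y$ that re-enters the induction. Using $h_{f^{*}A_Y}=h_Y\circ f+O(1)$ this gives
\[
m\,h_{\omega_{X/Y}}(P)+q\,h_Y(f(P))\ \ge\ h_{A_X}(P)-O(1)\qquad\bigl(P\in X_t(k),\ t\in U,\ P\notin\operatorname{supp} E\bigr).
\]

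The second step is Vojta. I would fix $\varepsilon<\tfrac1{2m}$ and apply Vojta's height conjecture (Conjecture \ref{conj:Vojta}) to the smooth proper Deligne--Mumford stack $X$ (with boundary $\Delta$), the ample $A_X$, and this $\varepsilon$: there is a proper closed substack $Z\subsetneq X$ with $h_{K_X}(P)\le\varepsilon h_{A_X}(P)+d_k(\calT_P)+O(1)$ for $P\in X(k)\setminus(Z\cup\Delta)$. Over the smooth locus of $f$ one has $K_X=\omega_{X/Y}+f^{*}K_Y$, and $-K_Y$ is dominated by a multiple of $A_Y$, so $-h_{K_Y}(f(P))\le C_1 h_Y(f(P))+O(1)$; combining with the previous display and using $m\varepsilon<\tfrac12$ yields $h_{A_X}(P)\le c_0\bigl(h_Y(f(P))+d_k(\calT_P)+1\bigr)$ on $X(k)\setminus(Z\cup\Delta\cup\operatorname{supp} E)$, hence the asserted bound there since $h_Y\ge 1$. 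It remains to handle $P$ with $f(P)\in U$ lying in the proper closed locus $Z\cup\operatorname{supp} E$ (together with the locus where $X$ or the fibers fail to be smooth, which we also discard): restricting $f$ to it and replacing $Y$ by the closure of the image, its fibers over $U$ are proper subvarieties of the $X_t$ --- hence of general type by hyperbolicity, hence hyperbolic --- so after resolving in families and, if necessary, shrinking the base, the induction hypothesis applies; one then relates $h_{A_X}$ on $X$ to the pulled-back height on the resolution, and concludes by taking the maximum of the finitely many constants produced.

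The main obstacle is not any single estimate but the uniformity bookkeeping. Producing one power $m$ good for every fiber rests on the Noetherian-stabilization argument above, which tacitly uses that ``of general type'' is well-behaved in the family. More seriously --- and this is what makes the induction terminate --- one must ensure that neither Vojta's exceptional locus $Z$ nor the bigness divisor $E$ contains an entire fiber $X_t$ with $t\in U$, for otherwise the descent on the fiber dimension has nothing of smaller dimension to descend to; this is exactly where hyperbolicity is used, via passage to subvarieties of the fibers. Finally, one must verify that the relative discriminant $d_k(\calT_P)$ is intrinsic to $P$ --- unchanged up to $O(1)$ under the finitely many resolutions, compactifications, and closed immersions used --- so that it does not accumulate through the induction; this is the purpose of the formalism of Section \ref{section:discr}. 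The stacky setting (making sense of $\omega_{X/Y}$, heights, Vojta's inequality, and $d_k$ on Deligne--Mumford stacks) adds a layer, but one supplied by Sections \ref{section:discr} and \ref{section:heights_on_stacks}.
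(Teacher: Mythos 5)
Your proposal is essentially correct and shares the same skeleton as the paper's proof --- a single application of Vojta's height conjecture to the total space, Kodaira's criterion to extract effectivity from bigness of $\omega_{X_t}$, and Noetherian induction on $\dim X$ with hyperbolicity ensuring that the subvarieties produced along the way are still of general type --- but the technical realization differs in a few ways worth noting. The paper does not attempt to uniformize the power $m$ over all of $U$: it applies Kodaira's criterion only on the generic fibre $\calX_\eta$ of $f$ (writing $nK_{\calX_\eta}\sim_\Q A+E$ there), takes Zariski closures of $A$ and $E$ in $\calX$, and bounds the resulting vertical $\Q$-divisor by a multiple of $h_Y\circ f$; this packages into Proposition \ref{prop:mainprop} and avoids your ``double'' induction on the base entirely, because the loci over which things fail are absorbed into the single exceptional substack $\calZ$. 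Your uniformization of $m$ by semicontinuity does go through, but be careful with the justification: an increasing chain of closed (or constructible) substacks of a Noetherian space need not stabilize in general; what saves you here is that each $V_n$ is \emph{closed} and that the generic point of (each irreducible component of) $U$ lies in some $V_N$, forcing $V_N=U$ --- the phrase ``by Noetherianity'' misattributes which finiteness is actually used. Two further differences: (i) you compare $K_X=\omega_{X/Y}+f^*K_Y$ and absorb $-K_Y$ into $A_Y$, whereas the paper never forms $\omega_{X/Y}$ and instead deals directly with the vertical correction divisor $\calF$; (ii) you invoke the boundary ($\Delta\neq0$) form of Vojta's conjecture after passing to a smooth compactification of the strata, but the paper's version (Conjecture \ref{conj:Vojta} / Proposition \ref{prop:vstacks}) carries no boundary divisor --- this is avoidable since $X$ is proper to begin with, and the paper instead desingularizes $X$ globally (Lemma \ref{lem:desing}) before applying Proposition \ref{prop:mainprop}. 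Finally, at the induction step you assert the fibres of the exceptional locus $Z\to Y$ are \emph{proper} subvarieties of $X_t$; strictly speaking $Z$ may contain whole fibres over a proper closed locus in $Y$, which the paper handles cleanly by inducting simply on $\dim X$ (so $\dim X_i<\dim X$ for each component $X_i$ of $Z$) without having to track whether the new fibres are proper in the old ones.
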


Note that Ih proves Theorem \ref{thm:stackthm} under the additional assumptions that the fibres are one-dimensional, and $Y$ is a scheme; see  \cite[Theorem~1.0.1]{ih}. If one assumes  that $Y$ is a scheme, then the discriminant term $d_k(\calT_P)$ can be omitted (as it equals zero). 

Ih's theorem for families of curves is slightly more general than Theorem \ref{thm:stackthm}, as he treats points of bounded degree, and not merely rational points. To keep the proofs slightly more transparent, we have restricted our attention to rational points. However, the transition from rational points to points of bounded degree can be made easily. Furthermore, the generalization of Ih's theoem to stacks is unavoidable if one desires applications to all curves simultaneously; see Theorem \ref{thm1} below, and the discussion following it.

One    cannot expect a stronger uniformity type statement for heights on (not necessarily hyperbolic) varieties of general type.   Indeed, if $k$ is a number field and $f \colon X\to Y$ is a smooth proper morphism of $k$-schemes whose geometric fibres are varieties of general type  and $t$ is a point in $Y$ such that $X_t$ contains a copy of $\mathbb P^1_{k(t)}$, then there is no real number $c>0$ such that for all $P \in X_t$, the inequality $h(P) \leq c \cdot h_Y(f(P))$ holds.    

Our proof of Theorem \ref{thm:stackthm} uses the recent \cite{avap}, which shows that Vojta's conjecture actually implies a version of the conjecture for stacks. Moreover, to prove  Theorem \ref{thm:stackthm} we  follow the strategy of Ih. Indeed, we combine an induction argument with an application of Vojta's conjecture to a desingularization of $X$ (Proposition \ref{prop:mainprop}). This line of reasoning was also used in Ih's work \cite{ih, ih2}. 

We argue that it is  more natural to work in the stacks setting, as this allows us to apply our results to moduli stacks of hyperbolic varieties, thus obtaining more concrete results. In fact, as a first corollary of Theorem \ref{thm:stackthm} we obtain the following uniformity statement for curves.

\begin{theorem}\label{thm1} Assume Conjecture \ref{conj:Vojta}.
	Let $g\geq 2$ be an integer and let $k$ be a number field. There is a real number $c$ depending only on $g$ and $k$ satisfying the following.
	For all smooth projective curves $X$ of genus $g$ over $k$, and all   $P$ in $X(k)$, the following inequality holds
	\[ 
	h(P) \leq c(g, k) \cdot \big(h(X) +d_k(\mathcal T_{X})\big).
	\]   
 \end{theorem}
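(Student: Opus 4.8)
The plan is to deduce Theorem \ref{thm1} from Theorem \ref{thm:stackthm} by taking $Y$ to be a suitable compactification of the moduli stack of smooth projective genus $g$ curves and $X\to Y$ the universal family (suitably compactified). Concretely, I would let $\overline{\mathcal{M}}_g$ be the Deligne--Mumford stack of stable curves of genus $g$ over $k$, which is a proper smooth Deligne--Mumford stack, and let $\overline{\mathcal{M}}_{g,1}\to\overline{\mathcal{M}}_g$ be the universal stable curve; this morphism is proper, surjective, and representable by schemes. Fix a height function $h$ on $\overline{\mathcal{M}}_{g,1}$ and a height function $h_Y$ on $\overline{\mathcal{M}}_g$ attached to an ample divisor with $h_Y\ge 1$ (such exist by the discussion in Section \ref{section:heights_on_stacks}). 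Inside $\overline{\mathcal{M}}_g$ sit the open substack $\mathcal{M}_g$ of smooth curves; by Section \ref{section:heights_on_stacks} (and the standard comparison of heights on moduli) the height $h_Y([X])$ of the moduli point of a smooth genus $g$ curve $X$ is, up to a constant depending only on $g,k$, comparable to the Faltings/modular height $h(X)$ appearing in the statement, and the relative discriminant $d_k(\mathcal{T}_{[X]})$ matches $d_k(\mathcal{T}_X)$ up to the same kind of constant.

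The key point is to verify the hypothesis of Theorem \ref{thm:stackthm}: I need a constructible substack $U\subset \overline{\mathcal{M}}_g$ such that for all $t\in U$ the fibre $X_t$ is smooth and hyperbolic. Take $U=\mathcal{M}_g$, the locus of smooth curves. A smooth projective curve of genus $g\ge 2$ is of general type, and all of its integral subvarieties are either points or the curve itself, hence all of general type; so $X_t$ is hyperbolic in the sense of Definition \ref{defn:hyperbolic} for every $t\in\mathcal{M}_g$. Since $\mathcal{M}_g$ is an open (in particular locally closed, hence constructible) substack of $\overline{\mathcal{M}}_g$, the hypothesis holds. Now any smooth projective curve $X$ of genus $g$ over $k$ with a point $P\in X(k)$ gives a point of $\overline{\mathcal{M}}_{g,1}(k)$ lying over a point of $\mathcal{M}_g=U$, so Theorem \ref{thm:stackthm} yields $h(P)\le c'\cdot(h_Y([X])+d_k(\mathcal{T}_{[X]}))$ with $c'$ depending only on $k$, $\overline{\mathcal{M}}_g$, $\overline{\mathcal{M}}_{g,1}$, and the universal morphism --- i.e.\ only on $g$ and $k$. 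Combining this with the comparison estimates between $(h_Y([X]),d_k(\mathcal{T}_{[X]}))$ and $(h(X),d_k(\mathcal{T}_X))$, and adjusting the constant, gives the desired inequality $h(P)\le c(g,k)\cdot(h(X)+d_k(\mathcal{T}_X))$.

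The main obstacle I anticipate is purely bookkeeping about heights: one must know that the abstract height $h$ on $X$ appearing in Theorem \ref{thm1} (a height on the curve $X$ with respect to some fixed choice of data) is controlled by the restriction to the fibre of a fixed height $h$ on $\overline{\mathcal{M}}_{g,1}$, uniformly over all $X$. This is exactly the kind of statement that requires the theory of heights on stacks from Section \ref{section:heights_on_stacks}: one has a single height function on the universal object, and its restriction to each fibre is a height function on that fibre associated to a fixed line bundle (e.g.\ a power of the relative dualizing sheaf), and two height functions on a fixed curve attached to ample line bundles of the same numerical class differ by a bounded amount --- but here the subtlety is that the ``bounded amount'' must be uniform across the family, which is precisely what the properness of $\overline{\mathcal{M}}_{g,1}\to\overline{\mathcal{M}}_g$ and the stacky height machinery provide. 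The analogous comparison for $h_Y([X])$ versus $h(X)$ and for the two discriminant terms is similar in spirit. Once these comparisons are in place, the deduction from Theorem \ref{thm:stackthm} is immediate.
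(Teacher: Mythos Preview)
Your approach is exactly the paper's: apply the main theorem to the universal stable curve $\overline{\mathcal M}_{g,1}\to\overline{\mathcal M}_g$ with $U=\mathcal M_g$, noting that smooth genus $g\ge 2$ curves are hyperbolic and that $\mathcal M_g$ is open (hence constructible) in $\overline{\mathcal M}_g$.

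There is, however, one genuine slip. Theorem~\ref{thm:stackthm} gives
\[
h(P)\ \le\ c'\cdot\big(h_Y(f(P)) + d_k(\mathcal T_P)\big),
\]
where $\mathcal T_P$ is the normalized closure of the point $P=(X,P)$ \emph{in the total space} $\overline{\mathcal M}_{g,1}$, not $d_k(\mathcal T_{[X]})$ computed on the base $\overline{\mathcal M}_g$. You silently replaced one by the other. The passage from $d_k(\mathcal T_P)$ to $d_k(\mathcal T_{f(P)})=d_k(\mathcal T_X)$ is precisely Lemma~\ref{lem:disc_for_rep}: because $f$ is representable by schemes, $\mathcal T_P\to\mathcal T_{f(P)}$ is proper, birational, quasi-finite and representable, hence an isomorphism by Zariski's Main Theorem. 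The paper packages this step into Corollary~\ref{cor:thm} and then applies that corollary, rather than Theorem~\ref{thm:stackthm} directly. Your ``comparison of the two discriminant terms'' paragraph does not address this, since you compare $d_k(\mathcal T_{[X]})$ with $d_k(\mathcal T_X)$ (which are literally the same object), not $d_k(\mathcal T_{(X,P)})$ with $d_k(\mathcal T_X)$.

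Two smaller remarks. First, the height bookkeeping you worry about is in fact a non-issue: in Section~\ref{section:apps_curves} the paper \emph{defines} $h(X)$ to be $h_Y([X])$ for a fixed ample divisor on $\overline{\mathcal M}_g$, and the fibrewise height $h$ on a curve $X$ is taken with respect to the tricanonical embedding, which is the restriction of a fixed line bundle on the universal family; no further comparison is needed. Second, to make $\mathcal T_P$ and $\mathcal T_X$ well defined you must work with integral models, so one should take $\overline{\mathcal M}_{g,1}\to\overline{\mathcal M}_g$ over $\mathbb Z$ (as the paper does) rather than over $k$.
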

 
 The discriminant term $d_k(\mathcal{T}_{X})$ can not be omitted in Theorem \ref{thm1} (and neither in Theorem \ref{thm:stackthm}). To explain this, for an integer $n\geq 1$, define $d_n := n^5+1$ and define the  genus $2$ curve $C_n$  by $d_n y^2 =x^5+1$. Note that the height of $C_n$ is equal to the height of $C_1$, as $C_{n,\Qbar}\cong C_{1,\Qbar}$ and the height is a ``geometric'' invariant. Let $P_n := (1,n) \in \mathbb{Q}^2$ and note that $P_n$ defines a $\mathbb{Q}$-rational point of $C_n$. Since $h(P_n)$ tends to infinity as $n$ gets larger, we can not omit  the discriminant term in Theorem \ref{thm1}.

It is not clear how to deduce Theorem \ref{thm1} from Ih's results, as Ih's results only apply to families of curves parametrized by schemes.
 
 Finally, we also obtain  a uniformity statement for certain hyperbolic surfaces.  
 \begin{theorem}\label{thm2} Assume Conjecture \ref{conj:Vojta}.
 Fix an even integer $a$ and a number field $k$.
	There is a real number $c$ depending only on $a$ and $k$ satisfying the following.
	For all smooth hyperbolic surfaces $S$ over $k$ with $c_1^2(S) = a> c_2(S)$ and all $P$ in $S(k)$, the following inequality holds
	\[ h(P) \leq c \cdot \big( h(S) + d_k(\mathcal T_{S}) \big). \]
 \end{theorem}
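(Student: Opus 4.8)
The plan is to deduce Theorem \ref{thm2} from Theorem \ref{thm:stackthm} by exhibiting a suitable family of hyperbolic surfaces over a proper Deligne-Mumford stack, exactly as Theorem \ref{thm1} is obtained for curves. First I would fix the numerical data: an even integer $a$, and consider smooth projective surfaces $S/k$ with $c_1^2(S) = a > c_2(S)$. The key geometric input is that such surfaces are automatically of general type and, more importantly, satisfy the Bogomolov-type inequality $c_1^2 > c_2$, which by Bogomolov's theorem forces every integral curve on $S$ to have negative self-intersection in a controlled way, hence to be of general type; so $S$ is (algebraically) hyperbolic in the sense of Definition \ref{defn:hyperbolic}. (Here one should be slightly careful: one needs that the $\kbar$-points of $S$ give finitely many rational/elliptic curves and that these do not move, which is precisely Bogomolov's result on surfaces with $c_1^2 > c_2$; this guarantees hyperbolicity, possibly after the standard argument that a surface of general type with $c_1^2 > c_2$ has no rational or elliptic curves at all, or only finitely many which can be handled separately — but since we only assert hyperbolicity and the hypothesis is imposed on \emph{all} such $S$, the cleanest route is to invoke that $c_1^2 > c_2$ implies algebraic hyperbolicity directly, which is classical.)

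Next I would construct the moduli stack. Surfaces with fixed $c_1^2 = a$ and bounded $c_2$ (here $c_2 < a$, and $c_2 \geq $ some bound from Noether's inequality or Miyaoka-Yau failing is impossible, so $c_2$ ranges over a finite set of integers) form a bounded family: by Matsusaka's big theorem or the boundedness of surfaces of general type with fixed invariants, there is a quasi-projective coarse moduli scheme, and more to the point a finite-type Deligne-Mumford moduli stack $\mathcal{M}$ parametrizing canonically polarized surfaces with these Chern numbers, together with a universal family $\mathcal{S} \to \mathcal{M}$ which is proper and representable by schemes (the fibres are schemes, being surfaces). One then takes a proper Deligne-Mumford compactification $\overline{\mathcal{M}}$ of (the finitely many components of) $\mathcal{M}$, for instance via the KSBA/stable surfaces compactification, and spreads $\mathcal{S}$ out to a proper surjective morphism $f\colon \mathcal{X} \to \overline{\mathcal{M}}$ representable by schemes, with $\mathcal{M} \hookrightarrow \overline{\mathcal{M}}$ a constructible (indeed open) substack over which the fibres are the smooth hyperbolic surfaces in question. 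Equipping $\mathcal{X}$ with a height function $h$ and $\overline{\mathcal{M}}$ with a height $h_Y \geq 1$ attached to an ample divisor, the height $h(S)$ appearing in the theorem is, up to the usual bounded comparison between height functions, comparable to $h_Y$ evaluated at the moduli point $[S] \in \overline{\mathcal{M}}(k)$, and similarly the discriminant $d_k(\mathcal{T}_S)$ matches $d_k(\mathcal{T}_{[S]})$ by the definition in Section \ref{section:discr}.

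With this setup in place, I would apply Theorem \ref{thm:stackthm} to $f\colon \mathcal{X} \to \overline{\mathcal{M}}$, with $U = \mathcal{M}$, which is a constructible substack over which every fibre is smooth and hyperbolic by the discussion above. For a smooth hyperbolic surface $S/k$ with $c_1^2(S) = a > c_2(S)$ and a point $P \in S(k)$, the pair $(S, P)$ gives a $k$-point of $\mathcal{X}$ lying over the point $t = [S] \in \mathcal{M}(k) \subset U$; Theorem \ref{thm:stackthm} then yields a constant $c$, depending only on $k$, $\overline{\mathcal{M}}$, $\mathcal{X}$, and $f$ — hence ultimately only on $a$ and $k$ — such that $h(P) \leq c \cdot (h_Y([S]) + d_k(\mathcal{T}_{[S]}))$. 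Translating back via the comparison of heights and discriminants gives $h(P) \leq c \cdot (h(S) + d_k(\mathcal{T}_S))$ after absorbing the $O(1)$ error into a slightly larger constant (here $h_Y \geq 1$ and $h(S) \geq 1$ after enlarging, so the additive error is dominated).

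The main obstacle I expect is the construction of the proper Deligne-Mumford stack $\overline{\mathcal{M}}$ and the universal family $\mathcal{X}$ with the required properties — specifically, checking that the natural moduli problem is a Deligne-Mumford stack (finite automorphisms: true for canonically polarized surfaces), that it is of finite type (boundedness of surfaces with bounded Chern numbers, which is classical), that it admits a \emph{proper} Deligne-Mumford compactification to which the family extends \emph{as a morphism representable by schemes}, and that the height $h(S)$ used in the statement genuinely agrees up to $O(1)$ with a height attached to an ample divisor on $\overline{\mathcal{M}}$ pulled back appropriately. A secondary subtlety is confirming that $c_1^2 > c_2$ indeed forces algebraic hyperbolicity in the precise sense required (Definition \ref{defn:hyperbolic}), i.e. that \emph{every} integral subvariety — curves — is of general type; this is where Bogomolov's inequality enters, and one must be careful about the finitely many possible rational or elliptic curves, which on such surfaces either do not exist or are rigid and can be excluded by passing to a constructible substack. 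Once these geometric and moduli-theoretic facts are in hand, the deduction from Theorem \ref{thm:stackthm} is formal and parallels the proof of Theorem \ref{thm1}.
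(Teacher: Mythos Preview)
Your overall architecture --- build a proper Deligne--Mumford moduli stack with universal family, compactify, and apply Theorem~\ref{thm:stackthm} --- is exactly right and matches the paper. But there is a genuine gap in how you identify the constructible substack $U$, and it stems from a misreading of the statement.

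The theorem does \emph{not} assert that every smooth surface with $c_1^2 > c_2$ is hyperbolic; hyperbolicity is part of the hypothesis on $S$. Bogomolov's theorem only says that such a surface contains \emph{finitely many} rational or elliptic curves, not none, so your claim that ``$c_1^2 > c_2$ implies algebraic hyperbolicity directly, which is classical'' is false. Consequently you cannot take $U = \mathcal{M}$: over the full moduli stack the fibres are not all hyperbolic, and Theorem~\ref{thm:stackthm} does not apply.

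The correct substack is the hyperbolic locus $\mathcal{M}^h_{a,b} \subset \mathcal{M}_{a,b}$, and the entire substance of the proof is showing that this locus is \emph{constructible}. Here Bogomolov's finiteness alone is not enough: for a fixed surface you get finitely many bad curves, but as $S$ varies their degrees could a priori be unbounded, so you cannot describe the non-hyperbolic locus using finitely many incidence correspondences. What the paper uses is Miyaoka's effective refinement \cite[Theorem~1.1]{miyaoka}, which bounds the canonical degree of any rational or elliptic curve on $S$ by a constant depending only on $c_1^2$ and $c_2$. This uniformity lets one write the non-hyperbolic locus as the image of finitely many incidence schemes $\mathcal{W}_{a,b,d_i} \subset \mathrm{H}_{d_i}^{\mathrm{int}} \times \mathrm{Hilb}_{a,b}$, and constructibility then follows from Chevalley's theorem. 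Your hedge at the end (``can be excluded by passing to a constructible substack'') is pointing in the right direction, but the mechanism --- Miyaoka's uniform degree bound, not just Bogomolov's pointwise finiteness --- is the heart of the matter and is missing from your argument.
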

 
 We refer the reader to Section \ref{section:apps} for precise definitions of the height functions appearing in Theorems \ref{thm1} and \ref{thm2}. 
We prove Theorems \ref{thm1} and \ref{thm2} by applying Theorem \ref{thm:stackthm} to the universal family of the moduli space of curves and the moduli space of surfaces of general type, respectively. The    technical difficulty in applying Theorem \ref{thm:stackthm} is to prove the constructibility of the locus of points corresponding to hyperbolic varieties. In the setting of curves (Theorem \ref{thm1}) this is simple, whereas the case of surfaces (Theorem \ref{thm2}) requires deep results of Bogomolov and Miyaoka \cite{bogomolov, miyaoka}.  

Theorem \ref{thm:stackthm} applies to any family of varieties of general type for which the locus of hyperbolic varieties is constructible on the base.  However, as we show in Section \ref{section:apps}, verifying the constructibility of the latter locus is not straightforward.

We note that a conjecture of Lang (see \cite{langsurvey}) asserts that our  
 notion of hyperbolicity for $X$ is equivalent to being \emph{Brody hyperbolic}, i.e., that there are no non-constant holomorphic maps $f: \mathbb{C} \to X(\mathbb{C})$. In particular, as the property of being Brody hyperbolic is open in the analytic topology \cite{brody}, Lang's conjecture implies that the property of being hyperbolic is open in the analytic topology. In particular, assuming Lang's conjecture, if the locus of smooth projective hyperbolic surfaces is constructible in the moduli stack of smooth canonically polarized surfaces, then \cite[Expos\'e XII, Corollaire~2.3]{SGA1} implies that  it is (Zariski) open.

\subsection*{Acknowledgements}
	We would like to thank Dan Abramovich, Dori Bejleri, Marco Maculan, and Siddharth Mathur for useful comments and suggestions. We are most grateful to the referee for many comments and remarks which helped improve this paper.  K.A. was supported in part by funds from NSF grant DMS-1162367 and an NSF Postdoctoral Fellowship. A.J. gratefully acknowledges support from SFB/Transregio 45.

\section{Hyperbolicity} In this section the base field $k$ is a  field of arbitrary characteristic.

\begin{definition} Let $X$ be a proper   Deligne-Mumford stack of dimension $n$ over $k$. A divisor $D$ on $X$ is \emph{big} if $h^0(X, \calO_X(mD)) > c \cdot m^n$ for some $c>0$ and $m \gg1$. \end{definition}

Recall that a projective geometrically irreducible variety $X$ over $k$ is of general type if for a desingularization $\widetilde{X} \to X_{\textrm{red}}$ of the reduced scheme $X_{\textrm{red}}$, the sheaf $\omega_{\widetilde{X}}$ is big.  Note that, if $X$ is of general type and $\widetilde{X}\to X_{\textrm{red}}$ is any desingularization, then $\omega_{\widetilde{X}}$ is big.

\begin{definition}\label{defn:hyperbolic} A projective  scheme $X$ over $k$ is \emph{hyperbolic (over $k$)} if for  all its closed subschemes $Z$, any irreducible component of $Z_{\overline{k}}$ is  of general type. \end{definition}

Note that, if $X$ is a hyperbolic projective scheme over $k$, then $X$ and all of its closed subvarieties are of general type. Moreover, if $L/k$ is a field extension, then $X$ is hyperbolic over $k$ if and only if $X_L$ is hyperbolic over $L$.

For example,  a smooth proper geometrically connected  curve $X$ over $k$ is hyperbolic if and only if the genus of $X$ is at least two. 
	If $X$ is a smooth projective scheme over $\mathbb{C}$ such that the associated complex manifold  $X^{\textrm{an}}$ admits an immersive period map  (i.e., there exists a polarized variation of $\ZZ$-Hodge structures over $X^{\textrm{an}}$ whose differential is injective at all points), then $X$ is hyperbolic. This follows from  the proof of \cite[Lemma~6.3]{JL2} which uses Zuo's theorem  \cite{Zuo} (cf. \cite{Brunebarbe}).  Finally, 
	let $X$ be a smooth projective scheme over $\mathbb{C}$ and suppose that there exists a smooth proper morphism $Y\to X$ whose fibres have ample canonical bundle such that, for all $a$ in $X(\mathbb C)$, the set of $b$ in $X(\mathbb C)$ with $X_a \cong X_b$ is finite. Then $X$ is hyperbolic. This is a consequence of Viehweg's conjecture for ``compact'' base varieties \cite{patakfalvi}.

\subsection{Kodaira's criterion for bigness} We assume in this section that $k$ is of characteristic zero. Recall that for a big divisor $D$ on a projective variety, there exists a positive integer $n$ such that $nD \sim_\Q A +E$, where $A$ is ample and $E$ is effective  \cite[Lemma~2.60]{km}. We state a generalization of this statement (see Lemma \ref{lem:kod_crit}) which is presumably known; we include a proof for lack of reference.

\begin{lemma}\label{lem:bigness}
	Let $\pi \colon X\to Y$ be a quasi-finite morphism of proper Deligne-Mumford stacks over $k$. Let $D$ be a divisor on $Y$. The divisor $D$ is big on $Y$ if and only if $\pi^\ast D$ is big on $X$.
\end{lemma}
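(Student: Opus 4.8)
The plan is to reduce the statement about Deligne--Mumford stacks to the corresponding (essentially classical) statement about projective varieties, by passing to suitable finite covers and using that bigness of a divisor can be detected after pulling back along a dominant generically finite morphism. First I would reduce to the case where $X$ and $Y$ are varieties. Since bigness is a condition that may be checked on each irreducible component and is insensitive to nilpotents, and since $\pi$ being quasi-finite means it restricts to a quasi-finite morphism on each component of $X$ dominating a component of $Y$, one may assume $X$ and $Y$ are integral. Next, choosing a finite surjective morphism $\pi' \colon Y' \to Y$ from a scheme $Y'$ (which exists for a proper Deligne--Mumford stack over a field of characteristic zero, e.g.\ by the Keel--Mori theorem combined with a further cover, or by a standard Chow-type lemma for DM stacks), and likewise handling $X$, we reduce to a quasi-finite, hence generically finite once we further restrict to dominant components, morphism of projective varieties. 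Here one must be slightly careful: $\pi$ quasi-finite does not imply $\pi$ is finite, but it is quasi-finite and the source and target are proper, so $\pi$ has finite fibres; by Zariski's main theorem $\pi$ factors as an open immersion followed by a finite morphism, and on the level of the generic point this shows $\pi$ is generically finite onto its image.

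The heart of the matter is then: for a generically finite dominant morphism $g \colon X \to Y$ of projective varieties over $k$ (characteristic zero), a divisor $D$ on $Y$ is big if and only if $g^\ast D$ is big on $X$. For the ``only if'' direction, bigness of $D$ means $h^0(Y, \calO_Y(mD)) \gg m^{\dim Y}$, and the inclusion $H^0(Y, \calO_Y(mD)) \hookrightarrow H^0(X, \calO_X(mg^\ast D))$ given by pullback of sections (injective since $g$ is dominant) immediately yields $h^0(X, \calO_X(m g^\ast D)) \gg m^{\dim X}$ because $\dim X = \dim Y$. For the ``if'' direction, the cleanest route is the projection formula: if $g$ is generically finite of degree $e$, then for any line bundle $L$ on $Y$ one has a natural trace/norm map, or more simply one uses that $g_\ast \calO_X$ contains $\calO_Y$ as a direct summand (in characteristic zero, using the normalized trace $\tfrac{1}{e}\operatorname{Tr}$), so $g_\ast \calO_X(m g^\ast D) = \calO_Y(mD) \otimes g_\ast\calO_X$ has $\calO_Y(mD)$ as a direct summand, whence $h^0(Y, \calO_Y(mD)) \leq h^0(X, \calO_X(m g^\ast D))$, and again $\dim X = \dim Y$ forces bigness of $D$. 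One should carry this out on a desingularization or normalization if needed to make the projection formula and the splitting of the trace clean, noting bigness is a birational invariant.

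The main obstacle I anticipate is bookkeeping rather than a deep difficulty: making precise the passage from stacks to schemes while keeping track of the quasi-finiteness hypothesis (in particular justifying the existence of a finite cover by a scheme for a proper DM stack in characteristic zero, and checking that pulling back a quasi-finite morphism along such a cover remains quasi-finite so that the reduction is legitimate), and handling the dimension count for the definition of bigness on stacks as given in the paper (which is phrased via $h^0(X, \calO_X(mD)) > c m^n$ with $n = \dim X$). One must also confirm that $\dim X = \dim Y$ under a quasi-finite dominant morphism of integral stacks, which is where quasi-finiteness (as opposed to merely a morphism) is essential. Once these reductions are in place, the core equivalence for varieties is a short argument via pullback of sections in one direction and the normalized trace splitting of $g_\ast\calO_X$ in the other.
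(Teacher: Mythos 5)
Your reduction to normal projective varieties and your ``only if'' direction are fine, but the ``if'' direction has a genuine gap. From the normalized-trace splitting $g_\ast\calO_X \cong \calO_Y \oplus E$ you deduce $h^0(Y, \calO_Y(mD)) \leq h^0(X, \calO_X(mg^\ast D))$. Notice this already follows from the plain inclusion $\calO_Y \hookrightarrow g_\ast\calO_X$ (no trace needed), and it is exactly the same inequality you used for the ``only if'' direction. It bounds $h^0(Y, mD)$ from \emph{above}, so from $h^0(X, mg^\ast D) \gg m^n$ you get no lower bound on $h^0(Y, mD)$: bigness of $D$ does not follow, and in fact nothing new has been said. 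The trace, being additive, transports information in the wrong direction here.

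What makes the descent of bigness work is the \emph{multiplicative} norm, not the additive trace. For $g\colon X \to Y$ finite surjective of degree $e$ between normal integral projective varieties and $L$ a line bundle on $Y$, the norm is a (non-linear) map $H^0(X, g^\ast L) \to H^0(Y, L^{\otimes e})$ sending nonzero sections to nonzero sections. Fix an ample $H$ on $Y$; then $g^\ast H$ is ample on $X$. If $g^\ast D$ is big, Kodaira's lemma on $X$ gives a nonzero section of $g^\ast(mD - H)$ for some $m>0$, and its norm is a nonzero section of $emD - eH$ on $Y$. Since $eH$ is ample and $emD - eH$ is effective, $emD$ (hence $D$) is big. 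This norm-on-sections argument is precisely what the paper's one-line proof invokes through the relation $\pi_\ast \pi^\ast D \sim m D$: pushforward of divisors along a finite surjective morphism is the divisor-level incarnation of the norm. With the trace replaced by the norm, your scheme-cover reduction gives a valid and somewhat more explicit route than the paper's terse stacky phrasing.
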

\begin{proof}
	This follows from the definition of bigness, and the fact that $\pi_\ast \pi^\ast D $ is linearly equivalent to $ m D$, where $m\geq 1$ is some integer.
\end{proof}

If $D$ is a divisor on a finite type separated Deligne-Mumford stack $\mathcal X$ over $k$ with coarse space $\mathcal X\to \mathcal X^c$, then $D$ is \textit{ample} (resp. \textit{effective}) on $\mathcal X$ if there exists a positive integer $n$ such that $nD$ is the pull-back of an ample (resp. \textit{effective}) divisor on $\mathcal X^c$. Note that, if $\mathcal X$ has an ample divisor, then $\mathcal X^c$ is a quasi-projective scheme over $k$.

\begin{lemma}\label{lem:kod_crit} Let $\calX$ be a  proper   Deligne-Mumford stack   over  $k$ with projective coarse moduli space $\calX^c$.  If $D$ is a big divisor on $\calX$, then there exists a positive integer $n$ such that $nD~\sim_\Q~\calA + \calE$, where $\calA$ is ample and $\calE$ is effective. \end{lemma}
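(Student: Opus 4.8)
The plan is to reduce the statement for the stack $\calX$ to the known case of its coarse moduli space $\calX^c$, using the coarse space map $\pi\colon \calX\to\calX^c$ as the bridge. The key point is that $\pi$ is proper and quasi-finite (indeed, on a proper Deligne--Mumford stack the coarse space morphism is proper, and it is quasi-finite since it induces a bijection on geometric points up to automorphisms), so Lemma \ref{lem:bigness} applies to it.

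First I would observe that by definition of bigness on stacks via the coarse space (or directly from Lemma \ref{lem:bigness}), there is a divisor $D'$ on $\calX^c$ and a positive integer $m$ with $mD \sim_\Q \pi^\ast D'$, and moreover $D$ big on $\calX$ forces $D'$ big on $\calX^c$ by Lemma \ref{lem:bigness}. Since $\calX^c$ is a projective variety (more precisely, a projective scheme; one can work componentwise, or after reducing, as bigness is insensitive to nilpotents), the classical Kodaira criterion \cite[Lemma~2.60]{km} provides a positive integer $n'$ with $n' D' \sim_\Q A' + E'$ where $A'$ is ample and $E'$ is effective on $\calX^c$. Pulling back along $\pi$ and using $\pi^\ast D' \sim_\Q mD$, I get $n' m \cdot D \sim_\Q \pi^\ast A' + \pi^\ast E'$. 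Now $\pi^\ast A'$ is ample on $\calX$ and $\pi^\ast E'$ is effective on $\calX$ by the very definitions of ample and effective divisors on a Deligne--Mumford stack recalled just before the lemma (a divisor is ample, resp. effective, if a positive multiple is the pull-back of an ample, resp. effective, divisor from the coarse space). Setting $n := n' m$, $\calA := \pi^\ast A'$, and $\calE := \pi^\ast E'$ finishes the argument.

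The main technical point — and the thing requiring the most care — is the comparison of $\Q$-divisor classes across $\pi$ and handling the possibility that $\calX$ (hence $\calX^c$) is reducible or non-reduced: one should either reduce to the case of an integral projective scheme at the start (bigness, ampleness, and effectivity all being checkable after passing to $\calX_{\mathrm{red}}$ and then to irreducible components, with the standard caveat about what ``big'' means on a reducible scheme) or invoke a version of \cite[Lemma~2.60]{km} valid for projective schemes. A secondary subtlety is making sure the linear equivalence $\pi_\ast\pi^\ast D \sim_\Q mD$ from the proof of Lemma \ref{lem:bigness}, combined with the analogous identity downstairs, is assembled correctly so that the ample and effective parts genuinely descend from honest ample and effective divisors on $\calX^c$; but this is bookkeeping rather than a real obstacle, since both ``ample on $\calX$'' and ``effective on $\calX$'' are defined precisely so as to be stable under the pull-back $\pi^\ast$ we are using.
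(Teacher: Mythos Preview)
Your proposal is correct and follows essentially the same route as the paper's own proof: descend a multiple of $D$ to the coarse space, apply the classical Kodaira criterion there, and pull back. The one point to sharpen is your justification for the existence of $D'$ on $\calX^c$ with $mD \sim_\Q \pi^\ast D'$: this does not follow from Lemma~\ref{lem:bigness} (which only says bigness is preserved under quasi-finite pullback) nor from the paper's definition of bigness (which is phrased via growth of $h^0$, not via the coarse space); the paper instead invokes \cite[Proposition~6.1]{Olsson} for precisely this descent step.
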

\begin{proof}
 Let $\pi \colon \calX \to \calX^{c}$ denote the morphism from $\calX$ to its coarse moduli space $\calX^c$. It follows from \cite[Proposition~6.1]{Olsson} that  there exists a positive integer $m$ such that $mD$ is $\mathbb Q$-linearly equivalent to the pullback of a divisor $D_0$ on $\calX^c$. As $mD$ is a big divisor on $\calX$,  the divisor $D_0$ is big on $\calX^c$ (Lemma \ref{lem:bigness}).     By Kodaira's criterion for bigness, there exists a positive integer $m_2$ such that  $m_2 D_0 $ is $\mathbb{Q}$-linearly equivalent to  $ A + E$, where $A$ is an ample divisor on $\calX^c$ and $E$ is an effective divisor on $\calX^c$. Write $n = m\cdot m_2$. We now  see that $nD = m \cdot m_2\cdot D \sim_{\mathbb Q} \pi^\ast m_2 D_0 \sim_{\mathbb Q} \pi^\ast (A+E)$. Since $\mathcal A\colonequals \pi^\ast A$ is ample, and $\mathcal E \colonequals \pi^\ast E$ is effective, this concludes the proof of the lemma.  
\end{proof}

\section{Vojta's conjecture for varieties and stacks} In this section, we let $k$ be a number field.
We begin by recalling Vojta's conjecture for heights of points on schemes, using   \cite{avap} and \cite{Vojta}. Our statement of the conjecture is perhaps not the most standard, but is more natural for our setting as we will need the extension of the conjecture to algebraic stacks. 

\subsection{Discriminants of fields} Before defining the conjecture, we recall discriminants of fields following Section 2 of \cite{avap}. Given a finite extension $E / k$, define the \emph{relative logarithmic discriminant} to be:
\begin{align}
d_k(E) &= \dfrac{1}{[E:k]} \log|\disc(\calO_E)| - \log|\disc(\calO_k)| 
= \dfrac{1}{[E:k]} \deg(\Omega_{\calO_E / \calO_k}),
\end{align}
where the second equality follows from the equality of ideals $(\disc(\calO_k)) = N_{k/\Q} \det \Omega_{\calO_k / \Z}.$

 \subsection{Heights}
 In this paper we will use \emph{logarithmic} (Weil) heights.  For more details, we refer the reader to \cite{bg, hs}. 
 
 \begin{definition}\label{def:ht} Let $d $ be the degree of $k$ over $\Q$  and let $M_k$ be a complete set of normalized absolute values on $k$. The (logarithmic) height of a point $P = \left[x_0 : \dots : x_n\right]~\in~\PP^n(k)$ is defined to be: $$h_k(P) = \frac{1}{d} \sum_{v \in M_k} \log(\max_{0 \leq i \leq n} \{ \|x_i\|_v \}).$$ \end{definition}
 
 If $X$ is a projective variety with a projective embedding  $\phi: X \hookrightarrow \PP^n$, we can define a height function $h_\phi \colon X \to \mathbb{R}$ given by $$h_\phi(P) = h(\phi(P)).$$ More generally, given a very ample divisor $D$ on $X$, we define $h_D(P) = h(\phi_D(P))$, where $\phi_D$ is the natural embedding of $X$ in $\PP^n$ given by $D$. (We stress that $h_D$ is well-defined, up to a  bounded function.)
 
  \begin{prop}\label{properties1} The following statements hold.
 \begin{enumerate}
 	\item If $f \colon X \to Y$ is a morphism, then $h_{X,f^*D} = h_{Y,D} + O(1)$. 
 	\item If $D$ and $E$ are both divisors, then $h_{D+E} = h_D + h_E + O(1)$. 
 	\item If $D$ is effective, $h_D \geq O(1)$ for all points not in the base locus of $D$.
 \end{enumerate} \end{prop}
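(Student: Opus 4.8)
The plan is to deduce all three statements from the construction of the Weil height machine, reducing everything to the tautological height on projective space of Definition \ref{def:ht}. As a preliminary I would record two facts about the tautological height: for the Segre embedding $\sigma\colon\PP^n\times\PP^m\hookrightarrow\PP^{nm+n+m}$ one has $h(\sigma(x,y))=h(x)+h(y)$, and for any morphism $g\colon X\to\PP^n$ the function $h\circ g$ depends, up to a bounded function, only on the isomorphism class of the line bundle $g^\ast\calO_{\PP^n}(1)$ --- this is the functoriality of the tautological height, proved by writing the defining coordinates in a common local trivialization and applying the product formula (see \cite{bg, hs}). I would then invoke the fact that on a projective variety every divisor is a difference $D=A-B$ of two very ample divisors, set $h_D\colonequals h_A-h_B$, and check via the Segre identity that this is well-defined up to $O(1)$.

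Granting this setup, property (2) follows immediately: writing $D=A-B$ and $E=A'-B'$ with $A,B,A',B'$ very ample, we have $D+E=(A+A')-(B+B')$ with $A+A'$ and $B+B'$ again very ample, and the additivity of the tautological height under $\sigma$ gives $h_{D+E}=h_D+h_E+O(1)$. For property (1) I would first treat very ample $D$ on $Y$, where $\phi_D\circ f\colon X\to\PP^n$ is a morphism pulling $\calO_{\PP^n}(1)$ back to $\calO_X(f^\ast D)$, so the functoriality of the tautological height yields $h_{X,f^\ast D}=h_{Y,D}\circ f+O(1)$; the general case then reduces to the very ample one by writing $D=A-B$ and combining property (2) with $f^\ast(A-B)=f^\ast A-f^\ast B$.

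For property (3), given an effective divisor $D$ I would use the canonical section $s_D\in H^0(X,\calO_X(D))$ whose divisor of zeros is $D$: after replacing $D$ by a multiple (which only rescales $h_D$ and hence does not affect the sign of the asserted bound), I would include $s_D$ among a basis of sections of $\calO_X(D)$ defining the map to $\PP^n$, and then bound from below, at each place $v$, the contribution of $s_D$ to $\max_i\|s_i(P)\|_v$ for $P$ outside the base locus, summing over $v$ to get $h_D(P)\ge -C$ with $C$ independent of $P$. I expect this last step --- extracting a uniform lower bound out of effectivity, place by place --- to be the only part requiring genuine care; parts (1) and (2) are essentially bookkeeping with the Segre embedding and the presentation of divisors as differences of very ample ones.
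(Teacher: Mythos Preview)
The paper does not actually prove this proposition; its entire proof is a citation to \cite[Theorems~B.3.2.b,~B.3.2.c,~and~B.3.2.e]{hs}. Your proposal is essentially the standard argument one finds in that reference (and in \cite{bg}), so you are supplying what the paper deliberately omits rather than taking a different route.

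Parts (1) and (2) of your sketch are fine. For part (3) there is a genuine subtlety you should be aware of. Computing the height via a basis $s_0=s_D,s_1,\dots,s_n$ of $H^0(X,\calO_X(mD))$ yields $h(\phi_{|mD|}(P))$, the height of the image under the rational map $\phi_{|mD|}$, and this equals $h_{mD}(P)+O(1)$ only when $mD$ is globally generated. An arbitrary effective divisor need not become globally generated after passing to multiples: if $D=E$ is the exceptional curve of a blow-up of a surface at a point, then $h^0(\calO_X(mE))=1$ for every $m\ge 0$, the map $\phi_{|mE|}$ is constant, yet $h_E$ is unbounded above. What \emph{is} true in general is the inequality $h_{mD}(P)\ge h(\phi_{|mD|}(P))+O(1)$ outside the base locus, and since the right-hand side is nonnegative this suffices; but that inequality itself needs proof. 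The cleaner route, and the one carried out in \cite{hs}, is to stay with your decomposition $D=A-B$: multiplication by $s_D$ gives an injection $H^0(\calO_X(B))\hookrightarrow H^0(\calO_X(A))$, so for $P\notin\mathrm{Supp}(D)$ the linear system $|A|$ contains $s_D\cdot|B|$ as a sub-system, and comparing the two very ample heights directly yields $h_A(P)\ge h_B(P)+O(1)$, hence $h_D(P)\ge O(1)$. This avoids the rational-map issue entirely and uses only the very ample case you have already set up.
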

 \begin{proof}
See  \cite[Theorems~B.3.2.b,~B.3.2.c,~and~B.3.2.e]{hs}.
 \end{proof}

\subsection{Vojta's conjecture} We now state Vojta's conjecture for schemes. We stress that this conjecture (Conjecture \ref{conj:Vojta}) implies a version for stacks; see Proposition \ref{prop:vstacks}.

\begin{conj}[Vojta]\cite[Conjecture~2.3]{Vojta}\label{conj:Vojta} Let $X$ be a smooth projective scheme over $k$. Let $H$ be a big line bundle on $X$, let $r$ be a positive integer, and fix $\delta > 0$. Then there exists a proper Zariski closed subset $Z \subset X$   such that,  for all closed points $x \in  X$ with $x\not\in Z$ and $[k(x):k]\leq r$, $$ h_{K_X}(x) - \delta h_H(x)  \leq d_k(k(x))  + O(1).$$ \end{conj}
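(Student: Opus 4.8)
The final statement is Vojta's arithmetic main conjecture, which is open; what follows is a plan for how one would attack it (and a record of the cases where the plan succeeds), not a proof. The plan is organized around Vojta's dictionary between Nevanlinna theory and Diophantine approximation: the inequality
\[
h_{K_X}(x) - \delta\, h_H(x) \;\leq\; d_k(k(x)) + O(1)
\]
is the arithmetic analogue of the Second Main Theorem of Nevanlinna--Cartan, with the relative logarithmic discriminant $d_k(k(x))$ playing the role of the ramification counting function, $h_H(x)$ the role of the characteristic function, and the excluded set $Z$ the role of the exceptional radii. So I would proceed by the auxiliary-polynomial method in three steps. First, construct a global section of a large tensor power of a suitable line bundle on a self-product $X^m$, chosen to vanish to prescribed high order along a carefully selected subscheme (the arithmetic analogue of the auxiliary form of Thue, Siegel, Roth, and Schmidt, after Vojta and Faltings). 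Second, bound the order of vanishing of this section at the diagonal point $(x,\dots,x)$ attached to the given bounded-degree point $x$, using Faltings' product theorem (or, in dimension one, Dyson's and Roth's lemmas); here the height machinery over $\calO_{k(x)}$ is exactly what forces the discriminant term to appear on the right-hand side. Third, compare the two estimates: the locus on which they are incompatible is Zariski closed and becomes the set $Z$, while off it one reads off the desired inequality.

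This program gives a genuine proof in several cases, in each of which the conjecture is therefore a theorem. For $X=\PP^n$ with $K_X=\calO(-n-1)$ the statement --- with the discriminant term, and for points of bounded degree --- is the Schmidt subspace theorem over number fields, whose proof is precisely the construction above; the case $X=\PP^1$, $r=1$, is Roth's theorem. For $X$ a smooth projective curve of genus at least two one may take $H=K_X$ (ample, hence big): the case $r=1$ is immediate from Faltings' finiteness theorem, and the general bounded-degree case is Vojta's height inequality, following from the Bombieri--Vojta--Faltings proof of the Mordell conjecture and its refinements. For $X$ a closed subvariety of a semiabelian variety the conjecture follows from the work of Faltings, Vojta, and McQuillan on the Lang conjecture.

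The hard part --- and the reason the conjecture is open in general --- is the second step for an arbitrary smooth projective variety of general type: there is no known construction of an auxiliary section with enough control on its vanishing, which is to say there is no general ``arithmetic Second Main Theorem''. Over function fields one substitutes jet-differential techniques (Green--Griffiths, Demailly, Siu), but an arithmetic analogue would need a logarithmic-derivative lemma over number fields that is not available; moreover, controlling the contribution of $d_k(k(x))$ uniformly over all bounded-degree points $x$, rather than only over rational points, is strictly harder than the same statement with $d_k(k(x))$ suppressed. Accordingly, in this paper Conjecture~\ref{conj:Vojta} is not proved but assumed, being known unconditionally only in the ranges listed above; its extension to stacks (Proposition~\ref{prop:vstacks}), by contrast, is a theorem, deduced from the scheme case in \cite{avap}.
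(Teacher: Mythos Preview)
You have correctly identified the essential point: Conjecture~\ref{conj:Vojta} is stated in the paper as an open conjecture, not as a theorem, and the paper offers no proof of it whatsoever---it is simply assumed as a hypothesis in the main results (Theorem~\ref{thm:stackthm}, Proposition~\ref{prop:mainprop}, etc.). Your write-up explicitly acknowledges this (``in this paper Conjecture~\ref{conj:Vojta} is not proved but assumed''), so there is no discrepancy to report. The surrounding discussion of the Nevanlinna--Diophantine dictionary, the auxiliary-polynomial strategy, and the known cases (Roth, Schmidt, Faltings--Vojta for curves, subvarieties of semiabelian varieties) is accurate and well organized, but it goes beyond what the paper does: the paper neither sketches a strategy nor surveys the known cases, it merely records the statement with a citation to Vojta. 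In short, your proposal is not a proof---nor could it be---and you say so; that matches the paper exactly.
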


Note that the discriminant term $d_k(k(x))$ equals zero when $x$ is a $k$-rational point of $X$.

\subsection{Vojta's conjecture for stacks}
Before stating the version of Vojta's conjecture for Deligne-Mumford stacks, we introduce some preliminaries, following Section 3 of \cite{avap}. If $S$ is a finite set of finite places of $k$, we let $\calO_{k,S}$ be the ring of $S$-integers in $k$.

\subsubsection{The stacky discriminant} Let $\calX \to \Spec(\calO_{k,S})$ be a   finite type separated Deligne-Mumford stack with generic fibre $X\to \Spec k$. 
Given a point $x \in \calX(\overline{k}) = X(\overline{k})$, we define   $\calT_x \to \calX$ to be the normalization of the closure of $x$ in $\mathcal X$. Note that $\calT_x$ is a normal proper Deligne-Mumford stack over $\calO_{k,S}$ whose coarse moduli scheme is $\Spec(\calO_{k(x), S_{k(x)}})$. 
Here $S_{k(x)}$ is the set of finite places of $k(x)$ lying over $S$.

\subsubsection{Relative discriminants for stacks}\label{section:discr}

Let $E$ be a finite field extension of $k$, and let $\mathcal T$ be a normal separated Deligne-Mumford stack over $\mathcal O_E$ whose coarse moduli scheme is $\Spec  \calO_E$. We define  the relative discriminant of $\mathcal T$ over $\calO_k$ as follows: \begin{align} d_k(\calT) &= \dfrac{1}{\deg(\calT / \calO_k)}\deg(\Omega_{\calT / \Spec(\calO_k)}). \end{align}
Note that $d_k(\mathcal T)$ is a well-defined real number, and  that $\exp(d_k(\mathcal{T}))$ is  a rational number.

 \subsubsection{Heights on stacks}\label{section:heights_on_stacks}

 Let $X$ be a finite type   Deligne-Mumford stack over $k$ with finite inertia whose   coarse space $X^c$ is  a quasi-projective scheme over $k$. Fix a finite set of finite places $S$ of $k$ and a finite type separated Deligne-Mumford stack $\mathcal X\to \Spec(\calO_{k,S})$ such that $\calX_k \cong X$. Let $H$ be a divisor on $X$. Let $n\geq 1$ be an integer such that $nH$ is the pull-back of a divisor $H^c$ on $X^c$. Fix a height function $h_{H^c}$ for $H^c$ on $X^c$. We define the height function $h_H$ on $X(k)$ with respect to $H$ to be  $$h_H(x) \colonequals \frac{1}{n} h_{H^c}(\pi(x)).$$
 Note that $h_H$ is a well-defined function on $X(\overline{k})$ which is independent of the choice of $n$ and $H^c$.
 
 We now give another way to compute the height function, under suitable assumptions on $X$.
  By \cite[Theorem~2.1]{kv}, a finite type separated Deligne-Mumford stack   over $k$ which is a quotient stack and has a quasi-projective coarse moduli space admits a finite flat surjective morphism $f \colon Y \to \calX$, where $Y$ is a   quasi-projective scheme. Fix a height function $h_{f^\ast H}$ on $Y$. We define the height $h_H(x)$ of $x\in \calX(\overline{k})$ as follows.  If  $x \in \calX(\overline{k})$, then we choose   $y \in Y(\overline{k})$ to be a point over $x$, and we define  $$h_H(x) \colonequals h_{f^*(H)}(y).$$ 
It follows from the projection formula (which holds for Deligne-Mumford stacks, in particular see the introduction of \cite{Vistoli}) that $h_H$ is a well-defined function on $\mathcal X(\overline{k})$.  Moreover, if $H$ is ample, for all $d\geq 1$ and $C\in \mathbb R$, the set of isomorphism classes of $\overline{k}$-points $x$ of $\mathcal X$  such that $h_H(x) \leq C$ and $[k(x):k] \leq d$ is finite. 
 The analogous finiteness statement for $k$-isomorphism classes can fail. However, the set of $k$-isomorphism classes of $k$-points $x$ of $\mathcal X$ such that $h_H(x) + d_k(\mathcal T_x) \leq C$ and $[k(x):k] \leq d$ is finite. In particular, as $h_H(x) + d_k(\mathcal T_x)$ has the Northcott property, the expression $h_H(x) + d_k(\mathcal T_x)$ can be considered as ``the'' height of $x$  \cite{avap}.

\begin{prop}[Vojta's Conjecture for stacks]\label{prop:vstacks}  Assume Conjecture \ref{conj:Vojta} holds and fix $\delta > 0$.  Let $S$ be a finite set of finite places of $k$. Let $\calX$ be a smooth proper Deligne-Mumford stack over $\calO_{k,S}$ whose generic fibre $X = \calX_k$ is geometrically irreducible over $k$ and has a projective coarse space. Let $H$ be a big line bundle on $X$. Then,  there is a proper Zariski closed substack $Z \subset X$  such that, for all $x\in (X\setminus Z)(k)$ the following inequality holds $$h_{K_X }(x) - \delta h_H(x) \leq d_k(\calT_x)  + O(1).$$  \end{prop}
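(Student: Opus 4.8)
The plan is to deduce the stacky statement from the scheme-theoretic Conjecture~\ref{conj:Vojta} by passing to a finite flat cover of $\calX$ by a scheme, resolving its generic fibre, applying Vojta's conjecture there, and descending; this is the strategy of \cite{avap}. By \cite[Theorem~2.1]{kv}, after enlarging $S$ there is a finite flat surjective morphism $\pi\colon\calY\to\calX$ with $\calY$ an integral quasi-projective scheme over $\calO_{k,S}$ (one first obtains such a cover with $\calY$ a scheme, then replaces it by a component dominating $\calX$); since $\calX$ is proper over $\calO_{k,S}$, so is $\calY$. Enlarging $S$ further, we may also assume that $\calX$ and $\calY$ are smooth over $\calO_{k,S}$ and that $\pi$ is tamely ramified with no vertical ramification, since all of these hold over a dense open of $\Spec\calO_{k,S}$. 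Let $\rho\colon\wtilde{\calY}\to\calY$ be a resolution of singularities obtained by spreading out a resolution of the generic fibre $Y = \calY_k$ (we are in characteristic zero), put $\wtilde\pi = \pi\circ\rho$, and write $\wtilde Y = \wtilde{\calY}_k$, so that on generic fibres $\wtilde\pi$ restricts to a generically \'etale surjective morphism $\wtilde Y \to X$ of smooth projective $k$-schemes.

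Next I would record two divisor comparisons on $\wtilde Y$. First, $\wtilde\pi^\ast H$ is big: $\pi^\ast H$ is big on $Y$ by Lemma~\ref{lem:bigness}, and bigness is preserved under pullback along the proper birational morphism $\rho$. Second, by the Riemann--Hurwitz formula for the generically \'etale morphism $\wtilde Y \to X$ of smooth varieties, $K_{\wtilde Y} \sim \wtilde\pi^\ast K_X + \rho^\ast R + E_\rho$, where $R$ is the (effective) ramification divisor of $Y \to X$ and $E_\rho$ is the (effective) discrepancy divisor of $\rho$. Applying Conjecture~\ref{conj:Vojta} to the smooth projective $k$-scheme $\wtilde Y$, the big line bundle $\wtilde\pi^\ast H$, the integer $r = \deg\pi$, and the given $\delta$, I obtain a proper Zariski closed subset $Z_0 \subset \wtilde Y$ with \[ h_{K_{\wtilde Y}}(y) - \delta\, h_{\wtilde\pi^\ast H}(y) \;\leq\; d_k(k(y)) + O(1) \] for all closed points $y \in \wtilde Y \setminus Z_0$ with $[k(y):k] \leq r$.

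For the descent, let $Z \subset X$ be the image under $\wtilde\pi$ of the union of $Z_0$, the exceptional locus of $\rho$, and the supports of $\rho^\ast R$ and $E_\rho$; since $\wtilde\pi$ is proper, $Z$ is Zariski closed, and since $\wtilde\pi$ is surjective while $Z_0 \subsetneq \wtilde Y$, we have $Z \neq X$. Fix $x \in (X \setminus Z)(k)$. Because $\pi$ is finite flat of degree $r$ and $x \notin \wtilde\pi(\mathrm{Exc}(\rho))$, the morphism $\rho$ is an isomorphism near $\pi^{-1}(x)$, so there is a closed point $y \in \wtilde Y$ over $x$ with $[k(y):k] \leq r$; moreover, since $x \notin Z$, this $y$ lies outside $Z_0$ and outside the supports of $\rho^\ast R$ and $E_\rho$. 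By functoriality and additivity of heights (Proposition~\ref{properties1}) and the definition of heights on stacks, $h_{\wtilde\pi^\ast H}(y) = h_H(x) + O(1)$ and $h_{K_{\wtilde Y}}(y) = h_{K_X}(x) + h_{\rho^\ast R}(y) + h_{E_\rho}(y) + O(1)$, so the left-hand side of Vojta's inequality equals $\big(h_{K_X}(x) - \delta\, h_H(x)\big) + h_{\rho^\ast R}(y) + h_{E_\rho}(y) + O(1)$. On the right-hand side, the relative differents computation of \cite[Section~3]{avap}, applied to $\pi$ (finite and tame, with ramification divisor $R$ on the generic fibre and no vertical contribution after our enlargement of $S$) together with $k(y) = k(\rho(y))$, gives $d_k(k(y)) \leq d_k(\calT_x) + h_{\rho^\ast R}(y) + O(1)$, because over the finite places the contribution of the ramification of $\pi$ to $\deg\Omega_{\calT_y/\calO_k}$ is bounded by the finite part of $h_{\rho^\ast R}(y)$, whose archimedean part is nonnegative. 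Feeding these into Vojta's inequality, the terms $h_{\rho^\ast R}(y)$ cancel, and since $h_{E_\rho}(y) \geq O(1)$ by Proposition~\ref{properties1}(3), we obtain $h_{K_X}(x) - \delta\, h_H(x) \leq d_k(\calT_x) + O(1)$, which is the desired inequality.

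The main obstacle is precisely the discriminant comparison in the last step: one must match, up to $O(1)$, the contribution of the ramification of the cover $\pi$ to the number-field discriminant $d_k(k(y))$ with the term $h_{\rho^\ast R}(y)$ coming from the Riemann--Hurwitz comparison of canonical bundles, so that these cancel and leave exactly the stacky discriminant $d_k(\calT_x)$ of Section~\ref{section:discr}. Making this precise is what motivates the definition of $d_k(\calT_x)$, and it requires the careful choice of integral models together with the enlargement of $S$ used to remove wild and vertical ramification of $\pi$; this bookkeeping is carried out in \cite{avap}, which I would invoke rather than reproduce. The remaining points --- bigness of $\wtilde\pi^\ast H$, the Riemann--Hurwitz identity, and that $Z$ is a \emph{proper} Zariski closed substack of the stack $X$ --- are routine.
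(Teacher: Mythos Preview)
The paper's own proof is a single line: ``This is \cite[Proposition~3.2]{avap}.'' Your proposal correctly identifies that the result is imported from \cite{avap} and gives a faithful outline of the argument there --- finite flat cover by a scheme via Kresch--Vistoli, resolution of the generic fibre, Vojta on the smooth projective cover with $r=\deg\pi$, Riemann--Hurwitz to compare canonical bundles, and cancellation of the ramification term against the excess in the field discriminant --- so your approach and the paper's are the same in substance; you have simply unpacked the citation.

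Two small remarks on the sketch. First, there is a minor inconsistency: you write that after enlarging $S$ one may assume $\calY$ is smooth over $\calO_{k,S}$, but then immediately resolve its generic fibre; if $\calY$ were smooth over $\calO_{k,S}$ the generic fibre $Y$ would already be smooth over $k$. What one actually arranges is that the \emph{resolved} model $\wtilde\calY$ is smooth over $\calO_{k,S}$ after enlarging $S$. Second, your justification that $Z\neq X$ only mentions $Z_0$, but $Z$ is the image of the union of $Z_0$ with the exceptional locus of $\rho$ and the supports of $\rho^\ast R$ and $E_\rho$; one should note that each of these is a proper closed subset of $\wtilde Y$ (the latter because $\wtilde Y\to X$ is generically \'etale and $\rho$ is birational), so their union is still proper. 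Neither point affects the validity of the argument, and you are right that the genuine work --- the discriminant comparison $d_k(k(y)) \le d_k(\calT_x) + h_{\rho^\ast R}(y) + O(1)$ --- is precisely the content of \cite{avap} that must be invoked.
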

\begin{proof}
This is \cite[Proposition~3.2]{avap}.
\end{proof}

\section{Applying the stacky Vojta conjecture}

 We prove a generalization of    \cite[Proposition~2.5.1]{ih2} to morphisms of proper Deligne-Mumford stacks, under suitable assumptions. We stress that our reasoning follows Ih's  arguments in \emph{loc.~cit.} in several parts of the proof.

Let $k$ be a number field,  and let $f:\mathcal X\to \mathcal Y$ be a proper morphism of proper integral Deligne-Mumford stacks over $B = \Spec \calO_{k,S}$, where $\mathcal{X}$ is  smooth   with a projective coarse moduli space.  Let $h$ be a height function on $\calX$ and let $h_\calY$ be a height function on $\calY$ associated to an ample divisor such that $h_\calY \geq 1.$ Let $\eta$ be the generic point of $\mathcal{Y}$, let $\calX_{\eta}$ be the generic fibre of $f \colon \calX\to \calY$, and let $\calX_k$ be the generic fibre of $\calX \to B$.  Note that $\mathcal X_k$ is a smooth proper Deligne-Mumford stack over $k$ with a projective coarse space.

\begin{prop}	 \label{prop:mainprop} 
	 Assume Conjecture  \ref{conj:Vojta}.  Suppose that the morphism $f$ is representable by schemes, and that $\calX_\eta$ is smooth and of general type. Then
	 there exists a real number $c(k,S, \calY,f)$ and a proper Zariski closed substack $\calZ \subset \calX$ such that, for all $P$ in $\calX (B)\setminus \calZ$, the following inequality holds:  $$h(P) \leq  c(k, \calY,f) \cdot \big(d_k(\mathcal T_P)+ h_\calY(f(P))\big).$$
 \end{prop}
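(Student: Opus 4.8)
The plan is to reduce Proposition \ref{prop:mainprop} to an application of Vojta's conjecture for stacks (Proposition \ref{prop:vstacks}) applied to $\calX_k$, together with Kodaira's criterion (Lemma \ref{lem:kod_crit}) to control the relative canonical sheaf of $f$. First I would pass to the generic fibre $\calX_k\to\Spec k$, which is a smooth proper Deligne-Mumford stack with projective coarse space by hypothesis. The key geometric input is that the relative canonical divisor $K_{\calX/\calY}$ restricts on the generic fibre $\calX_\eta$ to $K_{\calX_\eta}$, which is big since $\calX_\eta$ is of general type. By Lemma \ref{lem:kod_crit} applied to $\calX_\eta$ (or, after spreading out, directly on $\calX$ over an open of $\calY$), there is a positive integer $n$ with $nK_{\calX_\eta}\sim_\Q \calA+\calE$ where $\calA$ is ample and $\calE$ effective; I would then spread this out to a relation $nK_{\calX/\calY}\sim_\Q \calA + \calE + f^\ast \calD$ valid over a nonempty open substack of $\calY$, where $\calD$ is some divisor on $\calY$ and $\calA$ is $f$-ample. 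Absorbing the bad locus of $\calY$ into $\calZ$, we may assume this relation holds on all of $\calX$.

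Next I would compare height functions. Using Proposition \ref{properties1}(1), functoriality gives $h_{\calX, f^\ast \calD}(P) = h_{\calY,\calD}(f(P)) + O(1)$, and since $h_\calY$ is associated to an ample divisor with $h_\calY\geq 1$, we have $h_{\calY,\calD}\leq c_1 \cdot h_\calY + O(1)$ for some constant $c_1$ depending on $\calY$ and $\calD$ (hence on $f$). Choosing an auxiliary ample line bundle $H$ on $\calX_k$, bigness of $K_{\calX_k}$ together with $\calA$ being (relatively) ample lets me bound $h = h_{\calX,D}$ (for whatever fixed $D$ defines $h$) by a multiple of $h_H + h_\calY\circ f + O(1)$, again using Proposition \ref{properties1} and the fact that $H$ dominates $D$ up to a positive multiple and an effective divisor (enlarging $\calZ$ by base loci). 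Then, for any $\delta>0$, Proposition \ref{prop:vstacks} applied to $\calX_k$ with big line bundle $H$ yields a proper closed substack $Z_0\subset \calX_k$ such that $h_{K_{\calX_k}}(P) - \delta h_H(P)\leq d_k(\calT_P) + O(1)$ for all $P\in(\calX_k\setminus Z_0)(k)$; I take $\calZ$ to be the closure of $Z_0$ in $\calX$ together with the bad loci already collected.

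Finally I would assemble the inequality. From $nK_{\calX/\calY}\sim_\Q \calA+\calE+f^\ast\calD$ and $\calA$ being $f$-ample, after a further comparison I get $n\cdot h_{K_{\calX_k}}(P) \geq h_{\calA}(P) - h_{f^\ast\calD}(P) + O(1) \geq h_{\calA}(P) - c_1 h_\calY(f(P)) + O(1)$, while $\calA$ ample on $\calX_k$ means $h = h_D$ is bounded by $c_2 \cdot h_\calA + O(1)$ off the base locus of a suitable effective divisor (pushed into $\calZ$). Combining with Vojta's inequality and choosing $\delta$ small relative to the comparison constant between $h_H$ and $h_\calA$, the $\delta h_H$ term is absorbed into a fraction of $h_\calA$, leaving $h(P) \leq c\cdot\big(d_k(\calT_P) + h_\calY(f(P))\big) + O(1)$; since $h_\calY\geq 1$, the additive $O(1)$ is absorbed into the constant, giving the stated bound with $c = c(k,\calY,f)$. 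The main obstacle I anticipate is the spreading-out step: making precise that Kodaira's relation on the geometric generic fibre extends to a genuine $\Q$-linear equivalence $nK_{\calX/\calY}\sim_\Q \calA+\calE+f^\ast\calD$ over a dense open substack of $\calY$, with $\calA$ relatively ample and the exceptional behaviour confined to a proper closed substack that can legitimately be thrown into $\calZ$ — this requires care with stacky divisors, the representability of $f$ (so that fibres are honest schemes and Lemma \ref{lem:kod_crit} or standard scheme-theoretic bigness applies fibrewise), and keeping all constants depending only on $k$, $\calY$, and $f$.
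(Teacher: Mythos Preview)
Your proposal is essentially correct and follows the same strategy as the paper's proof: apply Vojta's conjecture for stacks to $\calX_k$, use Kodaira's criterion on the generic fibre $\calX_\eta$ to write a multiple of the canonical as ample plus effective, spread this out to $\calX$ at the cost of a vertical divisor whose height is controlled by $h_\calY\circ f$, and assemble. The paper organizes this slightly differently---it fixes a single ample $\Delta$ on all of $\calX$ from the outset (playing both the role of your $H$ in Vojta and the comparison divisor for $h$), shows directly that $(K_\calX-\epsilon\Delta)|_{\calX_\eta}$ is effective for small $\epsilon$, and takes Zariski closures to obtain $K_\calX-\epsilon\Delta+\calF\sim_\Q \tfrac{1}{m}\calE$ with $\calF$ vertical---but the content is the same.

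Two small slips in your write-up are worth noting. First, you refer to ``bigness of $K_{\calX_k}$'', but only $K_{\calX_\eta}$ is assumed big; $\calX_k$ itself need not be of general type, and the argument does not require it. Second, your $\calA$ is only $f$-ample after spreading out, so the comparison $h\leq c_2\cdot h_\calA+O(1)$ is not immediate; you need either to twist $\calA$ by the pullback of an ample divisor on $\calY$ (the extra term is absorbed into $h_\calY\circ f$) or, as the paper does, to work throughout with a single divisor that is ample on $\calX$ globally. Neither issue affects the overall strategy.
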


\begin{proof} 	
	Let $\Delta$ be an ample divisor on $\calX$ such that the associated height $h_\Delta$ on $\calX$ satisfies $h_\Delta \geq 1$. Note that the push-forward of $\Delta$ to the coarse space is ample.  Recall that $\mathcal{X}_k$ denotes the generic fibre of $\mathcal{X}\to B$. 
	Moreover, Vojta's conjecture (Conjecture \ref{conj:Vojta}) implies Vojta's conjecture for stacks (Proposition \ref{prop:vstacks}).  Therefore, by Vojta's conjecture for stacks (Proposition \ref{prop:vstacks}) applied to $\calX_k$, 
	there exists a proper Zariski closed substack $Z\subset \calX_k$ such that, for all $P \in \calX_k(k)\setminus Z$, the following inequality \[	h_{K_{\calX_k}}(P) - \dfrac{1}{2} \epsilon h_{\Delta}(P) \leq d_k(\calT_P) + O(1) \] holds, where we compute all invariants with respect to the model $\mathcal X$ for $\calX_k$ over $B$.
	In particular, there exists a proper closed substack  $\calZ$ of $\calX$ (namely, the closure of $Z$ in $\mathcal X$) such that, for all $P$ in $\calX(B) $ not in $\calZ$, the following inequality holds
	\begin{eqnarray}\label{eqn:vojta}
	h_{K_\calX}(P) - \dfrac{1}{2} \epsilon h_{\Delta}(P) \leq d_k(\calT_P) + O(1).
	\end{eqnarray}

Since $f$ is representable, $\calX_\eta$ is a scheme. Moreover, since $\calX_\eta$ is smooth and of general type, by the Kodaira criterion for bigness (Lemma \ref{lem:kod_crit}), there exists an ample divisor $A$ on $\calX_\eta$, an effective divisor $E$ on $\calX_\eta$, and a positive integer $n$ such that $$n(K_{\calX_\eta}) \sim_\Q  A + E.$$ For a small enough $\epsilon \in \Q _{>0}$, we can rewrite 
	\begin{align*} (K_\calX - \epsilon\Delta)|_\eta &= K_{\calX_\eta}  - \epsilon\Delta|_\eta
	 \sim_{\mathbb Q} \bigg(\dfrac{1}{n}A + \dfrac{1}{n}E\bigg) - \epsilon\Delta|_\eta \\
	 &= \bigg(\dfrac{1}{n}A - \epsilon\Delta|_\eta\bigg) + \dfrac{1}{n}E.
	\end{align*}  
	Thus, there exists an effective divisor $E'$ on $\calX_\eta$ and a positive integer $m$ such that  $$m\bigg(\bigg(\dfrac{1}{n}A -\epsilon\Delta|_\eta\bigg) + \dfrac{1}{n}E\bigg) \sim_{\mathbb Q} E'. 
 $$
  Taking Zariski closures of these divisors in $\calX$, it follows that there exists a vertical $\Q$-divisor $\calF$ on $\calX$ and an effective divisor $\mathcal E$ on $\calX$  such that $$K_\calX  -\epsilon \Delta + \calF \sim_{\mathbb Q} \dfrac{1}{m}\calE. $$

	 Since $\mathcal F$ is a vertical divisor on $\calX$,  there is an effective divisor $\mathcal G$ on $\calY$ such that $\mathcal F \leq f^*\mathcal G$. Therefore, by Proposition \ref{properties1},   the inequality $h_{\mathcal F} \leq  h_{f^*\mathcal{G}} +O(1)$ holds, outside of $\mathrm{Supp} \ (f^*\mathcal{G})$, and $h_{f^*\mathcal{G}} = (h_{\mathcal G} \circ f) + O(1)$. In particular, since $h_\calY$ is a height associated to an ample divisor,  we see that $h_{\mathcal{G}} \leq   O(h_\calY)$  by 	\cite[Proposition~5.4]{lang}. Therefore, for all  points $t$ in $\calY(k)$  and all $P \in \calX_t(B) \setminus \textrm{Supp}(f^*G)$, the inequality 
	 \begin{eqnarray*}
	 h_{\mathcal {F}}(P) & \leq  & h_{f^*\mathcal {G}}(P) + O(1)= h_{\mathcal G} ( f(P)) + O(1)\leq O\big(h_\calY(f(P))\big) + O(1)
	 \end{eqnarray*} 
	 holds, outside  of  $\mathrm{Supp} \ (f^*\mathcal{G})$. In particular, replacing $\mathcal Z$ by the  union of $\mathcal Z$ with $\mathrm{Supp}(f^\ast \mathcal G)$,   it follows that 
	  \begin{eqnarray}\label{eqn:vertical}h_{\mathcal {F}} \leq O(h_\calY \circ f) + O(1) 
	\end{eqnarray} outside $\mathcal Z$.
	Since $K_{\mathcal X} - \epsilon \Delta +F$ is effective, it follows that, replacing $\mathcal Z$ by a larger proper closed substack of $\mathcal X$ if necessary,   the inequality 
	\begin{eqnarray}\label{eqn:effectivity}
	h_{K_\calX - \epsilon \Delta + F} \geq O(1) 
	\end{eqnarray}
	holds outside $\mathcal Z$ by Proposition \ref{properties1} (3).
	
	Let $d_k(\mathcal T)$ be the function that assigns to a point $P$ in $\mathcal X(\overline{k})$ the real number $d_k(\mathcal T_P)$.  In particular, we obtain that
	\begin{align*} O(1) &\leq h_{K_\calX  -\epsilon \Delta +  F}\leq (h_{K_{\calX}} - \frac{1}{2} \epsilon h_\Delta) - \frac{1}{2} \epsilon h_\Delta + h_F + O(1)\\ &  \leq   (h_{K_{\calX}}   - \frac{1}{2} \epsilon h_\Delta) - \frac{1}{2} \epsilon h_\Delta + O(h_\calY\circ f)  + O(1) 
	 \leq  d_{k}(\mathcal T)  - \frac{1}{2} \epsilon h_\Delta + O(h_\calY \circ f)  + O(1),
	\end{align*}
	where the inequalities follow from Equation (\ref{eqn:effectivity}),   Proposition \ref{properties1}.(2),  Equation (\ref{eqn:vertical}), and Vojta's conjecture (\ref{eqn:vojta}) respectively. 
	
	We conclude  that, for all $t$ in $\mathcal Y(B)$ and all $P$ in $\calX_t(B) \setminus \calZ$ the inequality
	
	 $$\frac{1}{2} \epsilon h_\Delta(P) \leq d_k(\mathcal T_P)+ O(h_\calY(t)) + O(1)$$ holds.
	 Therefore, there is a real number $c>0$ such that, for all $t$ in $\mathcal Y(t)$ and all $P$ in $\mathcal X_t$ not in $\mathcal Z$,  the inequality $$h_\Delta(P) \leq c\cdot \bigg(d_k(\mathcal T_P)+ O\big(h_\calY(t)\big)\bigg) + O(1)$$ holds. In particular, replacing $c$ by a larger real number if necessary, we conclude that $$ h_\Delta(P) \leq  c\cdot \bigg(d_k(\mathcal T_P)+ h_\calY(t)\bigg) + O(1).$$    As $\Delta$ is ample and $h_\Delta \geq 1$, we conclude that, using 	\cite[Proposition~5.4]{lang} and replacing $c$ by a larger real number if necessary, for all $t$ in $\mathcal Y(t)$ and all $P$ in $\mathcal X_t$ not in $\mathcal Z$, the inequality
	  $$h(P) \leq O\big(h_\Delta(P)\big) \leq c \cdot \big(d_k(\mathcal T_P) + h_{\calY}(f(P))\big) + O(1)$$  holds. In particular, replacing $c$ by a larger real number $c(k,\mathcal Y, f)$ if necessary, we conclude that the following inequality   $$h(P) \leq  c(k,\calY,f) \cdot (d_k(\mathcal T_P)+ h_\calY(f(P)))$$ holds. 
	  \end{proof}

\section{Uniformity results}
Let $k$ be  a number field.
In this section we prove Theorem \ref{thm:stackthm}.

\begin{lemma}\label{lem:desing} Let $f \colon X \to Y$ be a proper surjective morphism of  proper   Deligne-Mumford stacks over $k$ which is representable by schemes.  
	Let $h$ be a height function on $X$ and let $h_Y$ be a height function on $Y$ associated to an ample divisor with $h_Y \geq 1$. 
	Assume Conjecture \ref{conj:Vojta}. Suppose that the generic fibre $X_\eta$ of $f\colon X\to Y$ is smooth and of general type. There exists a proper Zariski closed substack $Z\subset X$ and a real number $c$ depending only on $k$, $X$, $Y$, and $f$, such that, for all $P$ in $X(k)\setminus Z$, the following inequality holds
	\[
	h(P) \leq c \cdot \big(h_Y(f(P)) + d_k(\calT_P)\big).
	\] 
\end{lemma}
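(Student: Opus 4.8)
The plan is to reduce this statement to Proposition \ref{prop:mainprop} by choosing a suitable integral model over a ring of $S$-integers. First I would choose a finite set $S$ of finite places of $k$ large enough that $f \colon X \to Y$ spreads out to a proper morphism $\mathcal X \to \mathcal Y$ of proper Deligne-Mumford stacks over $B = \Spec \calO_{k,S}$ which is representable by schemes, with $\mathcal X_k \cong X$, $\mathcal Y_k \cong Y$, and with $\mathcal X$ smooth over $B$; one may also enlarge $S$ so that the ample divisor on $Y$ and the height functions $h$, $h_Y$ all extend compatibly to $\mathcal X$, $\mathcal Y$. This is standard spreading-out, using that $X$ is smooth and that properties such as properness, representability by schemes, and smoothness are constructible/spread out over a dense open of the base. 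The only subtlety is ensuring the coarse space of $\mathcal X$ stays projective over $B$ (again achieved by shrinking the base), so that Proposition \ref{prop:mainprop} applies.

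Next I would verify the hypotheses of Proposition \ref{prop:mainprop} for $\mathcal X \to \mathcal Y$ over $B$. The generic point $\eta$ of $\mathcal Y$ is the generic point of its generic fibre $Y$, so $\mathcal X_\eta = X_\eta$, which is smooth and of general type by hypothesis; the morphism $f$ (hence its model) is representable by schemes; and $\mathcal X$ is smooth with projective coarse space by the choice of model. Applying Proposition \ref{prop:mainprop} then yields a real number $c(k,\mathcal Y,f)$ and a proper Zariski closed substack $\calZ \subset \mathcal X$ such that for all $P \in \mathcal X(B) \setminus \calZ$ one has $h(P) \leq c(k,\mathcal Y,f)\cdot\big(d_k(\mathcal T_P) + h_\calY(f(P))\big)$.

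Finally I would transfer this back to $k$-points of $X$. Every $P \in X(k)$ extends uniquely to a section in $\mathcal X(B)$ by properness of $\mathcal X \to B$ (valuative criterion), and under this identification the quantities $h(P)$, $h_Y(f(P))$, and $d_k(\mathcal T_P)$ computed on $X/k$ agree with those computed on $\mathcal X/B$ up to the choices already fixed; the substack $Z := \calZ_k \subset X$ is a proper Zariski closed substack. Thus for all $P \in X(k) \setminus Z$ the desired inequality holds with $c = c(k,\mathcal Y, f)$, which depends only on $k$, $X$, $Y$, and $f$ as required. I expect the main obstacle to be purely bookkeeping: checking that the model can be chosen so that Proposition \ref{prop:mainprop} genuinely applies (smoothness of $\mathcal X$, projectivity of the coarse space, compatibility of all the height functions and the ample divisor with the chosen models), and confirming that enlarging $S$ does not affect the final constant in an essential way — there is no hard mathematics here beyond invoking Proposition \ref{prop:mainprop}, only the care needed to set up the integral model correctly.
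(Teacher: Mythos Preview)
There is a genuine gap: you are assuming that $X$ itself is smooth over $k$, but the hypothesis of the lemma only asserts that the generic fibre $X_\eta$ of $f\colon X\to Y$ (with $\eta$ the generic point of $Y$) is smooth. The total space $X$ is merely a proper Deligne--Mumford stack over $k$, possibly singular, so there is no reason your spread-out model $\mathcal X$ over $\calO_{k,S}$ can be taken smooth over $B$. Without smoothness of $\mathcal X$, Proposition~\ref{prop:mainprop} does not apply (its statement requires $\mathcal X$ smooth with projective coarse space, and its proof invokes Vojta's conjecture for the smooth stack $\mathcal X_k$).

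The paper's proof fixes exactly this point: one first takes a desingularization $\mu\colon \wtilde X \to X$ (via \cite{temkin}), so that $\wtilde X$ is smooth and the composed morphism $\wtilde f\colon \wtilde X \to Y$ still has generic fibre of general type. One then applies Proposition~\ref{prop:mainprop} to $\wtilde X$ (with the pulled-back height $\wtilde h = h\circ\mu$), obtaining a proper closed $\wtilde Z \subset \wtilde X$ and the desired inequality outside $\wtilde Z$. Finally one sets $Z \colonequals \mu(\wtilde Z)\cup X_{\mathrm{exc}}$, where $X_{\mathrm{exc}}$ is the exceptional locus of $\mu$; since $\mu$ is an isomorphism over $X\setminus X_{\mathrm{exc}}$, the inequality on $\wtilde X\setminus \mu^{-1}(Z)$ transfers verbatim to $X\setminus Z$. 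Your spreading-out bookkeeping is still needed at the level of $\wtilde X$, but the desingularization is the missing ingredient.
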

\begin{proof} We may and do assume that $X$ and $Y$ are geometrically integral over $k$.

	Let $\mu \colon \wtilde{X}\to X$ be a desingularization of $X$; see \cite[Theorem~5.3.2]{temkin}. Note that $\wtilde{f} \colon \wtilde{X} \to Y$ is a proper surjective morphism of proper Deligne-Mumford stacks whose generic fibre is of general type. Define $X_{exc}\subset X$ to be the exceptional locus of $\mu \colon \wtilde{X}\to X$, so that $\mu$ induces an isomorphism of stacks from $\wtilde{X}\setminus \mu^{-1}(X_{exc})$ to $X\setminus X_{exc}$.
	Note that $X_{exc}$ is a proper closed substack of $X$, as $X$ is reduced.
	
	Let $\wtilde{h}$ be the height function on $\wtilde{X}$ associated to $h$, so that, for all $\wtilde{P}$ in $\wtilde{X}$, we have $\wtilde{h}(\wtilde{P}) = h(P)$. As we are assuming Conjecture \ref{conj:Vojta}, it follows from  Proposition \ref{prop:mainprop} that there exists a proper Zariski closed substack $\wtilde{Z}\subset \wtilde{X}$ such that, for all  $\wtilde{P}$ in $\wtilde{X}(k)\setminus \wtilde{Z}$, the following inequality 
	\[
	\wtilde{h}(P) \leq c \cdot \big( h_Y(\wtilde{f}(P)) + d_k(\mathcal T_P)\big)
	\]
	holds, where $c$ is a real number depending only on $k$, $Y$, $X$, and $f$. (Here we use that $\wtilde{X}\to X$ only depends on $X$.)

	Define $Z$ to be the closed substack $\mu(\wtilde{Z})\cup X_{exc}$ in $X$.  Note that $\mu$ induces an isomorphism from $\wtilde{X}\setminus \mu^{-1}(Z) $ to $X\setminus Z$. Therefore, we conclude that, for all $P$ in $X(k)\setminus Z$, the inequality 
	\[ h(P) = \wtilde{h}(\wtilde{P}) \leq c \cdot \big(h_Y(f(P)) +  d_k(\mathcal T_P)\big) \] holds, where $\wtilde{P}$ is the unique point in $\wtilde{X}$ mapping to $X$.
\end{proof}

\begin{proof}[Proof of Theorem \ref{thm:stackthm}] 
	Since $U$ is constructible, we have that $U = \cup_{i=1}^n U_i$ is a finite union of locally closed substacks $U_i \subset Y$. Let $Y_i$ be the closure of $U_i$ in $Y$,   let $X_i = X\times_Y Y_i$, and let $f_i: X_i \to Y_i$ be the associated morphism. Note that $U_i$ is open in $Y_i$. In particular, to prove the theorem, replacing $X$ by $X_i$, $Y$ by $Y_i$,   $U$ by $U_i$, and $f:X\to Y$ by $f_i: X_i\to Y_i$ if necessary, we may and do assume that $U$ is open in $Y$.
	
	We now argue by induction on $\dim X$. If $\dim X = 0$, then the statement is clear.
	
As we are assuming Conjecture \ref{conj:Vojta}, it follows from  Lemma \ref{lem:desing} that there exists a proper Zariski closed substack $Z \subset X$ and a real number $c_0>0$ depending only on $k$, $X$, $Y$, and $f$ such that, for all $P$ in $X(k)\setminus Z$, the inequality 
\begin{eqnarray}\label{ineq1}
h(P) & \leq &  c_0\cdot \big( h_Y(f(P)) + d_k(\mathcal T_P)\big) 
\end{eqnarray}
holds.

Let $X_1,\ldots, X_s\subset Z$ be the irreducible components of $Z$. For $i\in \{1,\ldots,s\}$, let $Y_i = f(X_i)$ be the image of $Y_i$ in $Y$. Note that $f_i \colonequals f|_{X_i} \colon X_i\to Y_i$ is a proper morphism of proper integral Deligne-Mumford stacks which is representable by schemes. Moreover, for  $t$ in the open subscheme $Y_i\cap U$ of $Y_i$, the proper variety $X_{i,t}$ is hyperbolic, as $X_{i,t}$ is a closed subvariety of the hyperbolic variety $X_t$. Let $h_i$ be the restriction of $h$ to $X_i$, and let $h_{Y_i}$ be the restriction of $h_Y$ to $Y_i$. 

Since $X_i$ is a proper Zariski closed substack of $X$, it  follows that $\dim X_i < \dim X$. Therefore, by the induction hypothesis, we conclude that there is a real number $c_i>0$ depending only on $k$, $X_i$, $Y_i$, and $f_i$  such that, for all $P$ in $X_i(k)$, 
the following inequality 
\begin{eqnarray}\label{ineq2} 
h(P) = h_i(P) & \leq & c_i \cdot \big(h_{Y_i}(f_i(P)) + d_k(\mathcal T_{P})\big) = c_i \cdot \big(h_Y(f(P)) + d_k(\mathcal T_P)\big).
\end{eqnarray} holds. Let $c' \colonequals \max(c_1,\ldots,c_s)$. By (\ref{ineq2}),  we conclude that, for all $P$ in $Z(k)$, the inequality
\begin{eqnarray}\label{ineq3}
h(P) &\leq & c'\cdot \big( h_Y(f(P)) + d_k(\mathcal T_P)\big)
\end{eqnarray} holds. 

Combining (\ref{ineq1}) and (\ref{ineq3}), we conclude the proof of the theorem with $c\colonequals \max(c_0,c')$.
\end{proof}

\begin{lemma}\label{lem:disc_for_rep}
 Let  $f \colon \calX \to \calY$ be a proper surjective morphism of  proper   Deligne-Mumford stacks over $\mathcal{O}_K$ which is representable by schemes.  If $P\in \calX(k)$, then  $d_k(\mathcal{T}_P) = d_k(\mathcal{T}_{f(P))})$.
\end{lemma}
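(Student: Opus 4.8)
The claim is that for a point $P \in \calX(k)$, the relative discriminant of its associated stacky closure $\calT_P$ agrees with that of $\calT_{f(P)}$. The plan is to exploit the fact that $f$ is representable by schemes, so that $\calT_P \to \calT_{f(P)}$ is an isomorphism on the level of coarse spaces — indeed, since $P$ maps to $f(P)$ and $k(P) = k(f(P)) =: E$ (because a $k$-point maps to a $k$-point, and more generally the residue field is unchanged along a section-like point in a representable morphism), both $\calT_P$ and $\calT_{f(P)}$ are normal separated Deligne-Mumford stacks over $\calO_E$ with coarse moduli scheme $\Spec \calO_E$. The induced morphism $\calT_P \to \calT_{f(P)}$ over $\Spec \calO_E$ is then itself representable by schemes, quasi-finite, and birational onto its image; being a morphism of normal stacks with the same one-dimensional coarse space, I expect it to be an isomorphism (or at worst an open immersion that is an isomorphism after removing the places in $S$, which does not affect the degree computation).

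First I would record that $\calT_P$ is by definition the normalization of the closure of $P$ in $\calX$, and likewise $\calT_{f(P)}$ in $\calY$; functoriality of normalization and of taking closures gives a canonical morphism $g \colon \calT_P \to \calT_{f(P)}$ compatible with the projections to $\Spec \calO_{k,S}$ and with $f$. Second, I would argue that $g$ is an isomorphism: since $f$ is representable by schemes, the fiber product $\calT_{f(P)} \times_{\calY} \calX$ is a scheme, and $\calT_P$ maps to it; one checks that $\calT_P$ is the normalization of (the closure of $P$ in) this scheme, which, because the coarse space of $\calT_{f(P)}$ is the Dedekind scheme $\Spec \calO_E$ and $P$ is a $k$-point, forces the coarse space of $\calT_P$ to also be $\Spec \calO_E$ and $g$ to be an isomorphism on coarse spaces; representability then propagates this to an isomorphism of stacks. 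Third, having $g \colon \calT_P \xrightarrow{\sim} \calT_{f(P)}$, the equality
\[
d_k(\calT_P) = \frac{1}{\deg(\calT_P/\calO_k)} \deg(\Omega_{\calT_P/\Spec \calO_k}) = \frac{1}{\deg(\calT_{f(P)}/\calO_k)} \deg(\Omega_{\calT_{f(P)}/\Spec \calO_k}) = d_k(\calT_{f(P)})
\]
is immediate, since an isomorphism of stacks over $\Spec \calO_k$ identifies both the degrees over $\calO_k$ and the relative sheaves of differentials.

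The main obstacle is the second step: verifying carefully that $g$ is an isomorphism rather than merely a finite birational morphism. The subtlety is purely stacky — one must rule out that $\calT_P$ acquires extra automorphisms or ramification over $\calT_{f(P)}$ not visible on coarse spaces. I expect this to follow cleanly from representability of $f$ (which forces $\calT_{f(P)} \times_\calY \calX$ to be a scheme, hence $\calT_P \to \calT_{f(P)}$ to be representable) together with the observation that a representable quasi-finite birational morphism of normal Deligne-Mumford stacks whose target has a Dedekind coarse space and whose source has coarse space $\Spec \calO_E$ is an isomorphism; if a gap remains it would be at the finitely many places of $S$, and one then either enlarges $S$ harmlessly or notes that $\Omega$ and the degree are unaffected. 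Everything after that is formal.
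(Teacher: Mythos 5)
Your proposal follows the same overall strategy as the paper's proof: show that the canonical morphism $g\colon\calT_P\to\calT_{f(P)}$ is an isomorphism and then read off the equality of discriminants. The structure matches --- establish representability of $g$, note it is birational and finite, use normality of the target --- so you are on the right track. However, two of your intermediate claims are off, and they are precisely where the work happens.

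First, the assertion that $\calT_{f(P)}\times_\calY\calX$ is a scheme is not correct. The hypothesis that $f$ is ``representable by schemes'' means that for every \emph{scheme} $S\to\calY$ the fibre product $\calX\times_\calY S$ is a scheme, but $\calT_{f(P)}$ is in general a genuine Deligne--Mumford stack, not a scheme. What you do get, and what you actually need, is that the projection $\calT_{f(P)}\times_\calY\calX\to\calT_{f(P)}$ is representable by schemes (base change of a morphism representable by schemes), and hence that $g$ is representable by schemes; the paper obtains this by combining representability of $f$ with representability of normalization. Second, the step where ``representability propagates the isomorphism from coarse spaces to stacks'' is not a valid implication: the canonical map $\mathrm{pt}\to[\mathrm{pt}/G]$ for a finite group $G$ is representable by schemes, finite, and an isomorphism on coarse spaces, yet is not an isomorphism. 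The correct argument --- which you gesture at in your final paragraph without naming it --- is Zariski's Main Theorem for stacks: $g$ is proper, quasi-finite (hence finite), representable, and birational, and its target $\calT_{f(P)}$ is normal, so $g$ is an isomorphism. This is exactly what the paper cites. Once this is in place the discriminant equality is, as you say, immediate. So the idea is right, but the key step should be attributed to Zariski's Main Theorem rather than to a coarse-space argument, and the intermediate scheme-theoretic claim about the fibre product needs to be replaced by representability of $g$.
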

\begin{proof} Since the normalization morphism of an integral algebraic stack is representable and $\calX\to\calY$ is representable, we see that the morphism $\mathcal{T}_P\to\mathcal{T}_{f(P)}$ is representable. Therefore, we see that $\mathcal{T}_P\to \mathcal{T}_{f(P)}$ is proper    surjective and representable by schemes. Moreover, it is birational, and it  has a section generically. This implies that 
the morphism $\mathcal{T}_P\to \mathcal{T}_{f(P)}$ is a proper birational quasi-finite representable morphism. Since $\mathcal{T}_{f(P)}$ is normal, the lemma follows from Zariski's Main Theorem for stacks. 
\end{proof}
  
\begin{cor}\label{cor:thm}
Let $f \colon \calX \to \calY$ be a proper surjective morphism of  proper   Deligne-Mumford stacks over $\mathbb{Z}$ which is representable by schemes. Let $X:=\calX_{\mathbb{Q}}$ and $Y:=\calY_{\mathbb{Q}}$. Let $h$ be a height function on $X$ and let $h_Y$ be a height function on $Y$ associated to an ample divisor with $h_Y \geq 1$. 
Assume Vojta's height conjecture (Conjecture \ref{conj:Vojta}). Let $U\subset Y$ be a constructible substack such that, for all $t\in U$, the variety $X_t$ is smooth and hyperbolic.  Then there is a real number $c > 0$ depending only on $k$, $Y$, $X$, and $f$ such that, for all   $P$ in $X(k)$ with $f(P)$ in  $U$, the following inequality holds \[ h(P) \leq c  \cdot \big(h_Y(f(P)) + d_k(\calT_{f(P)})\big). \] 
\end{cor}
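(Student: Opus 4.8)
The plan is to deduce Corollary \ref{cor:thm} directly from Theorem \ref{thm:stackthm} by base change, and then to trade the discriminant term $d_k(\calT_P)$ for $d_k(\calT_{f(P)})$ by means of Lemma \ref{lem:disc_for_rep}. First I would base change the morphism $f\colon\calX\to\calY$ of proper Deligne--Mumford stacks over $\ZZ$ along $\Spec\calO_k\to\Spec\ZZ$, obtaining a proper surjective morphism $f\colon\calX_{\calO_k}\to\calY_{\calO_k}$ of proper Deligne--Mumford stacks over $\calO_k$, which is again representable by schemes since representability by schemes is stable under base change. Its generic fibre is the morphism $X_k\to Y_k$ of proper Deligne--Mumford stacks over $k$, and the pull-backs of $h$, of $h_Y$, and of the constructible substack $U\subset Y$ retain the relevant properties: $U_k\subset Y_k$ is constructible, and for every $t$ in $U_k$ the fibre $X_{k,t}$ is smooth and hyperbolic, because smoothness and hyperbolicity are insensitive to the field extension $k/\QQ$ (compare the remarks following Definition \ref{defn:hyperbolic}).

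Next I would apply Theorem \ref{thm:stackthm} to $f\colon X_k\to Y_k$ over $k$, computing all arithmetic invariants, and in particular the relative discriminant $d_k(\calT_P)$, with respect to the integral model $\calX_{\calO_k}$ (so with $S$ empty). This yields a real number $c>0$, depending only on $k$, $X_k$, $Y_k$, and $f$, and hence only on $k$, $X$, $Y$, and $f$, such that for every $P$ in $X(k)=\calX(k)$ with $f(P)$ in $U$ the inequality $h(P)\leq c\cdot\bigl(h_Y(f(P))+d_k(\calT_P)\bigr)$ holds.

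Finally I would invoke Lemma \ref{lem:disc_for_rep}, applied to the proper surjective representable morphism $\calX_{\calO_k}\to\calY_{\calO_k}$ over $\calO_k$ and to the point $P\in\calX(k)$: it gives $d_k(\calT_P)=d_k(\calT_{f(P)})$, with both discriminants taken with respect to the models $\calX_{\calO_k}$ and $\calY_{\calO_k}$ respectively. Substituting this equality into the inequality above produces $h(P)\leq c\cdot\bigl(h_Y(f(P))+d_k(\calT_{f(P)})\bigr)$, which is the assertion of the corollary. I do not expect any serious obstacle: all the mathematical content is carried by Theorem \ref{thm:stackthm} and Lemma \ref{lem:disc_for_rep}. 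The only point demanding a little care is to fix the integral models $\calX_{\calO_k}$ and $\calY_{\calO_k}$ coherently, so that the two discriminant terms being identified by Lemma \ref{lem:disc_for_rep} are precisely the ones occurring in the conclusion of Theorem \ref{thm:stackthm}.
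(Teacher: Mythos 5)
Your proposal is correct and takes essentially the same approach as the paper, which deduces the corollary by combining Theorem \ref{thm:stackthm} with Lemma \ref{lem:disc_for_rep}. The extra details you spell out — base changing to $\calO_k$, noting that representability by schemes and hyperbolicity are stable under this, and fixing the integral models coherently so that the discriminant terms in Theorem \ref{thm:stackthm} and Lemma \ref{lem:disc_for_rep} agree — are all implicit in the paper's one-line proof and correctly articulated.
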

\begin{proof}
Combine  Theorem \ref{thm:stackthm} and Lemma \ref{lem:disc_for_rep}.
\end{proof}

\section{Applications}\label{section:apps}
In this section we apply our main result (Theorem \ref{thm:stackthm}) to some explicit families of hyperbolic varieties, and  prove Theorems \ref{thm1} and \ref{thm2}.

\subsection{Application to curves}\label{section:apps_curves}

For $g\geq 2$ an integer, let $\mathcal M_g$ be the stack over $\mathbb Z$ of smooth proper genus $g$ curves. Let $\overline{\mathcal M}_g$ be the stack of stable genus $g$ curves. Note that $\mathcal M_g$ and $\overline{\mathcal M}_g$ are smooth finite type separated Deligne-Mumford stacks. Moreover, $\mathcal M_g\to \overline{\mathcal M}_g$ is an open immersion, and $\overline{\mathcal M}_g$ is proper over $\mathbb Z$ with a projective coarse space \cite[Theorem~5.1]{Kollar}. These properties of $\mathcal M_g$ and $\overline{\mathcal M}_g$ are proven in \cite{DeligneMumford}. We fix an ample divisor $H$ on $\overline{\mathcal M}_g$.

If $X$ is a smooth projective curve of genus at least two over a number field $k$, we let $h \colon X(\overline{k})\to \mathbb R$ be the height with respect to the canonical embedding $X\to \mathbb P^{5g-6}_k$. Moreover, we define the height of  $X$ to be the height of the corresponding $k$-rational point of $\overline{\mathcal M}_g$ with respect to the fixed ample divisor $H$ on $\overline{\mathcal M}_g$ (following Section \ref{section:heights_on_stacks}). 

If $X$ is a smooth projective curve of genus at least two over $k$ and $P$ is a $k$-rational point of $X$, then the pair $(X,P)$ defines a point on the universal curve $\mathcal{M}_{g,1}$ over $\mathcal{M}_g$.  We let $d_k(\mathcal T_{(X,P)})$ denote the discriminant of this point, as defined in Section \ref{section:discr}.

\begin{proof}[Proof of Theorem \ref{thm1}] Since $U:=\mathcal M_g$ is open in $Y:=\overline{\mathcal M_g}$, we can apply Corolary \ref{cor:thm} to the universal family of stable genus $g$ curves $f \colon X \to Y$.
\end{proof}

\begin{remark}\label{rem:faltings_height}
In Theorem \ref{thm1}, one can also use the (stable) Faltings height $h_{\mathrm{Fal}}(X)$ of $X$ (instead of the height $h$ introduced above). Indeed, it follows from \cite{Faltings, Pazuki} that the Faltings height $h_{\mathrm{Fal}}(X)$ is bounded by $ h(X) +c$, where $c$ is a real number depending only on the genus of $X$.
\end{remark}

\subsection{Hyperbolic surfaces}
Recall that, if  $S$ is a smooth projective surface, then $c_1^2(S) = K_S^2$ and $c_2(S) = e(S)$ is the topological Euler characteristic. Moreover, by Noether's lemma, they are related by the following equality: $$ \chi(S,\mathcal O_S) = \dfrac{c_1(S)^2 + c_2(S)^2}{12}.$$ In particular, the information of $K_S^2$ and $\chi(S)$ is equivalent to the data of $c_1(S)$ and $c_2(S)$. Finally, we note that $c_2(S) \geq  1$ for any surface of general type $S$  \cite[X.1~and~X.4]{beauville}. 

A smooth proper morphism $f:X\to Y$ of schemes is a canonically polarized smooth surface over $Y$ if, for all $y$ in $Y$, the scheme $X_y$ is  a connected two-dimensional scheme and $\omega_{X_y/k(y)}$ is ample.
If $a$ and $b$ are integers, we let $\mathcal M_{a,b}$ over $\mathbb Z$ be the stack of smooth canonically polarized surfaces
  $S$ with $c_1(S)^2 = a $ and $c_2(S)  = b$.  Note that $\mathcal M_{a,b}$ is a finite type algebraic stack over $\mathbb Z$ with  finite diagonal (cf. \cite{mm, tankeev}).

 \begin{lemma}\label{lem:smoothness_argument}   
 	If $S $ is   a smooth   hyperbolic surface over a field $k$, then $S$ is canonically polarized.  
 \end{lemma}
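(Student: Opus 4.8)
The plan is to show that a smooth hyperbolic surface $S$ over $k$ is minimal of general type, and then invoke the classification of such surfaces. First I would pass to the algebraic closure: since hyperbolicity and ``canonically polarized'' (i.e.\ ampleness of $\omega_S$) both descend along the field extension $\kbar/k$, we may assume $k = \kbar$. By Definition~\ref{defn:hyperbolic}, $S$ is of general type; the key point is to rule out the existence of curves on $S$ along which $\omega_S$ fails to be positive. The natural strategy is a contradiction argument using the structure theory of surfaces of general type: if $S$ is of general type but not minimal, then $S$ contains a $(-1)$-curve $C \cong \mathbb P^1$; but $\mathbb P^1$ is not of general type, contradicting that every integral subvariety of $S$ is of general type (which is part of hyperbolicity, via the remark following Definition~\ref{defn:hyperbolic}). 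Hence $S$ is minimal of general type.

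Next, for a minimal smooth projective surface $S$ of general type, $\omega_S = K_S$ is nef and big, and the only obstruction to ampleness is the presence of $(-2)$-curves, i.e.\ irreducible curves $C$ with $K_S \cdot C = 0$ and $C^2 = -2$; by the adjunction formula such a $C$ necessarily satisfies $2p_a(C) - 2 = C^2 + K_S\cdot C = -2$, so $p_a(C) = 0$ and $C \cong \mathbb P^1$. Again, since $S$ is hyperbolic, it cannot contain a rational curve, so no such $(-2)$-curve exists. By the Nakai--Moishezon criterion (or, equivalently, by the standard fact that the canonical model of a minimal surface of general type contracts exactly the $(-2)$-curves, so that $K_S$ is already ample when there are none), we conclude that $K_S = \omega_S$ is ample, i.e.\ $S$ is canonically polarized.

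The main obstacle, and the only place any real content enters, is justifying that a smooth projective surface of general type which contains no rational curves is automatically minimal with ample canonical bundle. This rests on two classical inputs from the Enriques--Kodaira classification: (i) a non-minimal surface carries a $(-1)$-curve, which is rational; and (ii) for a minimal surface of general type, $K_S$ is nef and big and is ample away from the $(-2)$-curves, each of which is rational. Both are standard (see, e.g.\ \cite{beauville}), so once the descent to $\kbar$ is in place the argument is short. I would also remark that this is exactly the surface analogue of the elementary fact, recalled earlier in the excerpt, that a smooth proper geometrically connected curve is hyperbolic if and only if it has genus at least two (equivalently, ample canonical bundle); the surface statement is the reason constructibility of the hyperbolic locus in $\mathcal M_{a,b}$ can be approached via $\mathcal M_{a,b}$ itself, as is done in the proof of Theorem~\ref{thm2}.
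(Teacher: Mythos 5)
Your proof is correct and follows essentially the same route as the paper: hyperbolicity forbids rational curves, and the Enriques--Kodaira classification of surfaces of general type reduces ampleness of $K_S$ to the absence of such curves. The one place you are more careful than the paper is the minimality step. The paper's proof opens with ``If $S$ is a (smooth) \emph{minimal} surface of general type, then the canonical model $S^c$ is obtained by contracting all rational curves with self-intersection $-2$\dots'' and proceeds to argue $S = S^c$, without ever stating why the hyperbolic surface $S$ in the lemma may be assumed minimal in the first place; you supply the (easy but necessary) observation that a non-minimal smooth surface carries a $(-1)$-curve $\cong \mathbb P^1$, which cannot occur on a hyperbolic surface, and only then invoke the $(-2)$-curve argument. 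One small imprecision in your write-up: you introduce $(-2)$-curves as ``irreducible curves $C$ with $K_S\cdot C = 0$ and $C^2 = -2$'' and then ``derive'' $C^2 = -2$ from adjunction, which is circular as phrased; the honest statement is that on a minimal surface of general type, any irreducible curve with $K_S\cdot C = 0$ has $C^2 < 0$ by the Hodge index theorem (since $K_S^2 > 0$), whence $C^2 = -2$ and $p_a(C) = 0$ by adjunction. This is cosmetic and does not affect the validity of the argument.
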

 \begin{proof}
If $S$ is a (smooth) minimal surface of general type, then the canonical model $S^{c}$ is obtained by contracting all rational curves with self intersection $-2$ \cite[Chapter~9]{Liu}. Consequently, the singularities on a singular surface in $\mathcal M_{a,b}(k)$ are rational double points arising from the contraction of these $-2$ curves. As having a $-2$ rational curve would contradict $S$ being hyperbolic, we see that $S^{c}$ must be smooth, and thus equal to $S$. As the canonical bundle on $S^c$ is ample, we conclude that $S$ is canonically polarized.  
 \end{proof}

  Let $\mathcal M_{a,b}^{h}\subset \mathcal M_{a,b}$ be the substack of hyperbolic surfaces, i.e., for a scheme $S$, the objects $f:X\to S$ of the full subcategory $\mathcal M_{a,b}^{h}(S)$ of $\mathcal M_{a,b}(S)$ satisfy the property that, for all $s$ in $S$, the surface $X_s$ is hyperbolic  (Definition \ref{defn:hyperbolic}).  
We do not know of any result on the algebraicity of $\mathcal M_{a,b}^h$ (nor the algebraicity of $\mathcal M_{a,b}^h\times_\ZZ\Spec\CC$). However, if $S$ is a minimal projective surface of general type  over $\mathbb C$ and $c_1^2(S) > c_2(S)$, then Bogomolov proved \cite{bogomolov} that $S$ contains only a finite number of curves of bounded genus, and thus $S$ contains only finitely many rational and elliptic curves.    In  \cite[Theorem~1.1]{miyaoka} Miyaoka proved a more effective version of Bogomolov's result, showing that in fact the canonical degree of such curves is bounded in terms of $c_1^2$ and $c_2$. Using these results we are able to prove the following.

\begin{lemma}\label{lem:constructibility}
	If $a>b$, then $\mathcal M_{a,b}^h\times_{\ZZ}\Spec \mathbb{C}$ is a constructible substack of $\calM_{a,b} \times_{\ZZ} \Spec \mathbb{C}$.
\end{lemma}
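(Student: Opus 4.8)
The plan is to work over $\CC$ throughout and to exhibit $\mathcal{M}_{a,b}^h \times_{\ZZ} \Spec\CC$ as a constructible subset of the (algebraic) stack $\mathcal{M}_{a,b} \times_{\ZZ} \Spec\CC$ by peeling off, in a Noetherian induction, the locus of surfaces that fail to be hyperbolic. First I would reduce to the following statement: the locus $N \subset |\mathcal{M}_{a,b,\CC}|$ of points $s$ such that the fibre $S_s$ is \emph{not} hyperbolic is a constructible subset; since constructible sets are closed under complementation in a Noetherian space, this suffices. By Lemma~\ref{lem:smoothness_argument} every surface parametrized by $\mathcal{M}_{a,b}$ with a hyperbolic fibre is canonically polarized, and more to the point each fibre $S_s$ is a smooth minimal surface of general type, so $S_s$ fails to be hyperbolic precisely when it contains an integral curve of geometric genus $0$ or $1$ (equivalently, a rational or elliptic curve): a minimal surface of general type has no other way to contain a non-general-type subvariety, since the only proper subvarieties are curves and a curve is of general type iff its geometric genus is $\geq 2$.

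The key input is then the Bogomolov--Miyaoka bound: because $a = c_1^2(S_s) > c_2(S_s) = b$ for every fibre, \cite{miyaoka}*{Theorem~1.1} gives a uniform constant $M = M(a,b)$ such that every rational or elliptic curve $C \subset S_s$ satisfies $(K_{S_s} \cdot C) \leq M$. So the next step is a relativization: after passing to a finite type test scheme $T \to \mathcal{M}_{a,b,\CC}$ which is smooth surjective (an atlas) with universal surface $\mathcal{S} \to T$, I would consider, for each integer $0 \le d \le M$, the relative Hilbert scheme (or relative Chow variety) $\mathrm{Hilb}^{d}_{\mathcal{S}/T}$ parametrizing curves $C$ in the fibres with $(K \cdot C) = d$; this is a $T$-scheme of finite type since the fibres of $\mathcal{S}/T$ are canonically polarized with bounded Hilbert polynomial once $d$ is fixed. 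Inside this Hilbert scheme the condition ``the universal curve $\mathcal{C} \to \mathrm{Hilb}^d$ has a fibre of geometric genus $0$ or $1$'' is again constructible — geometric genus is upper semicontinuous after passing to the normalization of $\mathcal{C}$ over a stratification, or one simply invokes that the locus of a flat family of curves with geometric genus $\le 1$ is constructible. Its image in $T$ under the (finite type, hence Chevalley) projection is therefore constructible, and the union over $d \in \{0,1,\dots,M\}$ is the preimage in $T$ of $N$. Since $T \to \mathcal{M}_{a,b,\CC}$ is an fppf (indeed smooth surjective) atlas, descending constructibility along it shows $N$ is a constructible subset of $|\mathcal{M}_{a,b,\CC}|$, and hence its complement $\mathcal{M}_{a,b,\CC}^h$ is constructible, which is exactly the claim; one then checks that this constructible subset of the topological space underlies a constructible \emph{substack} in the sense of the paper, i.e.\ a finite union of locally closed substacks, using that any constructible subset of a Noetherian algebraic stack carries a canonical reduced locally closed stratification.

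The main obstacle I expect is making the passage from the pointwise Bogomolov--Miyaoka bound to the \emph{uniform} finiteness needed for the Hilbert-scheme argument genuinely rigorous: one must know not only that each fibre contains finitely many rational/elliptic curves but that the \emph{degree} of these curves (with respect to the relatively ample $K_{\mathcal{S}/T}$) is bounded independently of the fibre — which is precisely what Miyaoka's effective bound supplies, and which is why the hypothesis $a > b$ is essential. A secondary technical point is the boundedness and finite-typeness of the relevant Hilbert/Chow schemes over $T$ for curves of bounded canonical degree in a family of canonically polarized surfaces with fixed invariants; this is standard (Grothendieck) once one fixes the Hilbert polynomial, but one should be slightly careful that bounded $(K\cdot C)$ together with $C \subset S$ of bounded $K^2$ pins down finitely many Hilbert polynomials. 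Finally, a minor bookkeeping step is confirming that upper semicontinuity of geometric genus in a flat family of curves yields a constructible (not merely pro-constructible) locus, for which one passes to a Noetherian stratification over which the normalization of the universal curve becomes flat; all of these are routine, so the crux remains the uniform degree bound furnished by \cite{bogomolov, miyaoka}.
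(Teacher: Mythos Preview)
Your proposal is correct and follows essentially the same strategy as the paper: invoke Miyaoka's uniform bound on the canonical degree of rational and elliptic curves (this is where $a>b$ enters), parametrize the candidate curves via finite-type Hilbert schemes, and apply Chevalley's theorem to conclude that the non-hyperbolic locus is constructible. The only differences are in packaging---the paper fixes a pluricanonical embedding into a common $\mathbb{P}^n$ and works with an incidence correspondence inside $\mathrm{H}_d^{\mathrm{int}} \times \mathrm{Hilb}_{a,b}$ rather than with relative Hilbert schemes over an atlas---and in the fact that you make the geometric-genus-$\leq 1$ condition explicit via stratification and semicontinuity, whereas the paper's formulation in terms of finitely many degrees $d_i$ elides this step.
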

\begin{proof} Let $a$ and $b$ be integers such that $a>b$.
	Let $N$ be an integer such that, for all $S$ in $\mathcal M_{a,b}(\mathbb C)$, the ample line bundle $\omega_{S/\mathbb C}^{\otimes N}$ is very ample. In particular,  $S$ is embedded in $\mathbb P^n \cong \mathbb P(\mathrm{H}^0(S,\omega_{S/\mathbb C}^{\otimes N}))$. Let $\mathrm{Hilb}_{a,b}$ be the   Hilbert scheme of $N$-canonically embedded smooth surfaces, and note that $\mathcal M_{a,b} = [\mathrm{Hilb}_{a,b}/\mathrm{PGL}_{n+1}]$. 
	
	Let $\mathrm{H}_d$ be the Hilbert scheme of    (possibly singular) curves of canonical degree $d$ in $\mathbb P^n$.  Let $\mathrm{H}_d^{\textrm{int}}$ be the subfunctor of geometrically integral curves. Since the universal family over $\mathrm{H}_d$ is flat and proper, the subfunctor  $\mathrm{H}_d^{\textrm{int}}$ is an open subscheme   of $\mathrm{H}_d$; see \cite[Appendix~E.1.(12)]{GW}.


 Let $\calW_{a,b,d} \subset \mathrm{H}_d^{\textrm{int}}\times \mathrm{Hilb}_{a,b}$ be the incidence correspondence subscheme parametrizing  parametrizing pairs $(C,S)$ where the curve $C$ is inside the surface $S$. (Note that $\calW_{a,b,d}$ is a closed subscheme of $\mathrm{H}_d^{\textrm{int}}\times \mathrm{Hilb}_{a,b}$.)  
 
 By Miyaoka's theorem \cite[Theorem~1.1]{miyaoka}, there exist integers $d_1,\ldots,d_m$ which depend only on $a$ and $b$ with the following property. A surface $S \in \mathcal M_{a,b}(\mathbb{C})$ is hyperbolic if and only if, for all $i=1,\ldots,m$, it does not contain an integral curve of degree $d_i$.

 Note that, by Chevalley's theorem, for all $d\in \ZZ$, the image of the composed morphism $$\calW_{a,b,d}  \subset \mathrm{H}_d \times\mathrm{Hilb}_{a,b}\to \mathrm{Hilb}_{a,b}\to \mathcal M_{a,b}$$ is constructible. Let $\mathcal M_{a,b,d_i}$ be the stack-theoretic image of $\calW_{a,b,d_i}$ in $\mathcal M_{a,b}$. Since a finite union of constructible substacks is constructible, the union 
 $\bigcup_{i=1}^m \mathcal M_{a,b,d_i} $  is a constructible substack of $\mathcal M_{a,b}$.
 
 Finally, by construction,  a surface $S$ in $\mathcal M_{a,b}(\mathbb C)$ is hyperbolic if and only if it is not (isomorphic to an object) in the constructible substack  $\bigcup_{i=1}^m \mathcal M_{a,b,d_i} $.  As the complement of a constructible substack is constructible, we conclude that $\mathcal M_{a,b}\times_\ZZ \Spec \mathbb{C}$ is a constructible substack of $\mathcal M_{a,b}\times_\ZZ\mathbb{C}$.
 \end{proof}

Let $\mathcal{U}_{a,b}\to \mathcal{M}_{a,b}$ denote the universal family.
 We let  $\overline{\mathcal M_{a,b,\mathbb Q}}$ be a compactification of $\mathcal M_{a,b,\mathbb Q}$ with a projective coarse moduli space;  see   \cite[Section 2.5]{hacking} (or \cite[Corollary~5.6]{Kollar}) for an explicit construction of such a compactification. (As the stack of smooth canonically polarized surfaces is open in the stack of canonical models,  it suffices to compactify the latter, as is achieved in \emph{loc. cit.} for all $a$ and $b$.)   We now choose $\overline{\mathcal M}_{a,b}$ to be a compactification of $\mathcal M_{a,b}$ over $\ZZ$ whose generic fibre $\overline{\mathcal M}_{a,b}\times_\ZZ\Spec \QQ$ is isomorphic to  $\overline{\mathcal M_{a,b,\QQ}}$, and we also choose a representable proper morphism $\overline{\mathcal{U}}_{a,b}\to \overline{\mathcal{M}}_{a,b}$ extending the universal family over $\mathcal{M}_{a,b}$ compatibly with the universal family over $\overline{\mathcal{M}_{a,b,\mathbb{Q}}}$.

If $S$ is a smooth projective canonically polarized hyperbolic surface  over a number field $k$, we let $h:S(\overline{k})\to \mathbb R$ be the height with respect to the very ample divisor $\omega_{S/k}^{\otimes 34}$ (see \cite{tankeev}). Moreover, we define the height of  $S$ in $\mathcal M_{a,b,\QQ}(k)$ to be the height of the corresponding $k$-rational point of $\overline{\mathcal M}_{a,b}$ with respect to some fixed ample divisor $H$ on $\overline{\mathcal M}_{a,b,\QQ}$ (following Section \ref{section:heights_on_stacks}). 


If $S$ is a smooth projective surface   and $P$ is a $k$-rational point of $S$, then the pair $(S,P)$ defines a point on the universal surface $\mathcal{U}_{a,b}$ over $\mathcal{M}_{a,b}$.  We let $d_k(\mathcal T_{(S,P)})$ denote the discriminant of this point, as defined in Section \ref{section:discr}.

\begin{proof}[Proof of Theorem \ref{thm2}] 
	By Lemma \ref{lem:constructibility} and standard  descent arguments (cf. \cite[Tag~02YJ]{stacks-project}), we conclude that  $\calM^h_{a,b} \times_{\ZZ} \Spec \mathbb{Q}$ is a constructible substack of $\calM_{a,b}\times_{\ZZ}\Spec \mathbb{Q}$. Also, a smooth hyperbolic surface is canonically polarized by Lemma \ref{lem:smoothness_argument}. Therefore, the result follows from an application of Corollary \ref{cor:thm} to the universal family over $\calY := \overline{\mathcal{M}_{a,b}}$, with  $Y:= \overline{\mathcal M}_{a,b,\QQ}$, and the constructible substack $U:= \mathcal M_{a,b,\mathbb{Q}}^h$ in $Y$. 
	 \end{proof}

 \begin{remark}
  There are many examples of surfaces of general type with $c_1^2 > c_2$.  Some of the simplest examples are surfaces $S$  with ample canonical bundle  such that there exist a smooth proper curve $C$ and a smooth proper morphism $S\to C$ (see for instance \cite{Kodaira}).
 \end{remark}

\bibliographystyle{alpha}
\bibliography{heights_stacks}

\end{document}